\tikzset{neg/.style={
		decoration={markings,
			mark= at position 0.5 with {
				\node[transform shape] (tempnode) {$\setminus$};
			}
		},
		postaction={decorate}
}}
\newtheorem{theorem}{Theorem}[section]
\newtheorem{lemma}[theorem]{Lemma}
\theoremstyle{definition}
\newtheorem*{definition*}{Definition}
\newtheorem*{proposition*}{Proposition}
\newtheorem*{theorem*}{Theorem}
\newtheorem*{corollary*}{Corollary}
\newtheorem*{example*}{Example}
\newtheorem*{problem*}{Problem}
\theoremstyle{remark}
\newcommand{\SA}{\operatorname{LipSNA}}
\newcommand{\Lip}{{\mathrm{Lip}}_0}
\begin{document}
	
	\title{Cantor sets of low density and Lipschitz functions on $C^1$ curves}
	
	\author[Chiclana, R.]{Rafael Chiclana}

	\address{Kent State University, Kent, Ohio}
	\email{rchiclan@kent.edu,}

	\begin{abstract}
		We characterize the functions $f\colon [0,1] \longrightarrow [0,1]$ for which there exists a measurable set $C\subseteq [0,1]$ of positive measure satisfying $\frac{|C\cap I|}{|I|}<f(|I|)$ for any nontrivial interval $I \subseteq [0,1]$. As an application, we prove that on any $C^1$ curve it is possible to construct a Lipschitz function that cannot be approximated by Lipschitz functions attaining their Lipschitz constant.
	\end{abstract}
	
	\maketitle
	
	\thispagestyle{plain}

	\section{Introduction}
	Let $|\cdot|$ denote the Lebesgue measure in $[0,1]$. Consider $C\subseteq[0,1]$ a measurable set with $|C|>0$ and $I\subseteq [0,1]$ an interval. In this paper we study how ``dense" the set $C$ is in $I$, which can be measured by
	\begin{equation}\label{ratio} \frac{|C\cap I|}{|I|}.
	\end{equation}
	More concretely, we study what conditions a function $f \colon [0,1] \longrightarrow [0,1]$ must satisfy to guarantee that there exists a measurable set $C\subseteq [0,1]$ with $|C|>0$ for which the ratio (\ref{ratio}) is bounded above by $f(|I|)$, for every nontrivial interval $I \subseteq [0,1]$. First, since $|C|>0$, it is clear that the infimum of $f$ must be positive. Moreover, Lebesgue's density theorem states that for almost every point $p \in C$ we have
	\[ \lim_{x\to 0^+} \frac{|C\cap [p-x,p+x]|}{2x} = 1,\]
	so $f(x)$ must converge to $1$ as $x$ approaches $0$. Our main result shows that these conditions are enough.
	\begin{theorem}\label{theo:Cantor}
		Let $f \colon [0,1] \longrightarrow \mathbb[0,1]$ be a function. Then, the following statements are equivalent:
		\begin{enumerate}[(i)]
			\item $\lim_{x\to 0^+} f(x)=1$ and $\inf_{[0,1]} f(x)>0$.
			\item There exists a measurable set $C\subseteq [0,1]$ of positive measure satisfying 
			\[ \frac{|C \cap I|}{|I|} < f(|I|) \quad \mbox{ for every nontrivial interval } I \subseteq [0,1].\]
		\end{enumerate}
	\end{theorem}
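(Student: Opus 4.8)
The implication (ii)$\Rightarrow$(i) is soft, so I would dispatch it first. Assume a set $C$ as in (ii) exists. That $\lim_{x\to 0^+}f(x)=1$ is forced by Lebesgue's density theorem: at a point $p\in C\cap(0,1)$ where $C$ has density $1$, applying (ii) to $I=[p-r,p+r]$ yields $f(2r)>\tfrac{1}{2r}|C\cap[p-r,p+r]|\to 1$ as $r\to 0^+$, and since $f\le 1$ this forces the limit. That $\inf f>0$ follows by contradiction: if $f(x_n)\to 0$, the $x_n$ cannot tend to $0$ (that would contradict the limit just established), so along a subsequence $x_n\ge\delta>0$; covering $[0,1]$ by at most $\lceil 1/\delta\rceil+1$ intervals of length $x_n$, one of them must carry at least the fraction $|C|/(\lceil 1/\delta\rceil+1)$ of $|C|$, so its relative $C$-density is bounded below by a positive constant independent of $n$, contradicting $\tfrac{|C\cap I|}{|I|}<f(x_n)\to 0$.

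For the substantive implication (i)$\Rightarrow$(ii), the first move is to reduce to $f$ non-increasing by replacing $f$ with $\tilde f(x):=\inf_{0<t\le x}f(t)$; this only shrinks $f$, keeps $\lim_{x\to0^+}\tilde f(x)=1$, makes $\tilde f$ non-increasing, and preserves $\inf\tilde f=\tilde f(1)=\inf_{(0,1]}f>0$, so it is enough to produce $C$ for $\tilde f$. Then I would build a generalized Cantor set $C=\bigcap_{n\ge 0}C_n$: set $C_0=[0,1]$ and obtain $C_n$ from $C_{n-1}$ by replacing each constituent closed interval $J$ of $C_{n-1}$ (all of common length $\ell_{n-1}$) with the union of $N_n$ closed subintervals of length $\ell_n:=\rho_n\ell_{n-1}/N_n$ placed inside $J$ periodically with period $\ell_{n-1}/N_n$. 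The survival ratios $\rho_n\in(0,1]$ and branching numbers $N_n\in\mathbb N$ are parameters to be fixed later; note $|C|=\prod_{j\ge 1}\rho_j$ (positive once $\sum_n(1-\rho_n)<\infty$) and $\ell_n=\prod_{j\le n}(\rho_j/N_j)\le\prod_{j\le n}N_j^{-1}\to 0$.

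The periodic placement gives the basic local bound: for any interval $W$, the level-$n$ set meets $W$ in measure at most $\rho_n|W|+2\ell_{n-1}/N_n$. Applying this to each of the at most $|I|/\ell_{n-1}+2$ level-$(n-1)$ intervals meeting a fixed interval $I$, and iterating the resulting recursion downward from level $k-1$, one obtains that whenever $|I|\ge\ell_{k-1}$,
\[
\frac{|C\cap I|}{|I|}\ \le\ \prod_{j\ge k}\rho_j\ +\ \kappa\sum_{j\ge k}\frac{1}{N_j}\ =:\ B_k ,
\]
for an absolute constant $\kappa$. Since $f$ is non-increasing, any $I$ with $\ell_{k-1}\le|I|\le\ell_{k-2}$ satisfies $f(|I|)\ge f(\ell_{k-2})$, and every nontrivial interval lies in some such band (with $\ell_0=1$, so the band $k=2$ covers $|I|=1$); hence the theorem is reduced to choosing the parameters so that $\sum(1-\rho_n)<\infty$ and $B_k<f(\ell_{k-2})$ for all $k\ge 2$.

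Making this choice is where the work lies and where both hypotheses on $f$ are used. I would prescribe the $\rho_n$ first, keeping $\sum(1-\rho_n)<\infty$ (so $|C|>0$) but forcing $\prod_{j\ge 2}\rho_j<\inf f$; for instance $\rho_1=1$, $\rho_n=r$ for $2\le n\le L+1$ with $r^L$ below $\inf f$, and $\rho_n=1-2^{-(n-L-1)}$ for $n>L+1$. The tails $\eta_k:=1-\prod_{j\ge k}\rho_j$ are then fixed positive numbers, known in advance and decaying no faster than geometrically. Now choose the $N_n$ inductively, each so large that $\ell_n$ is small enough to give $1-f(\ell_n)<\tfrac12\eta_{n+2}$ (possible precisely because $f(x)\to1$ as $x\to0^+$ and $\ell_n\le\prod_{j\le n}N_j^{-1}$ can be driven down at any rate), and overall growing fast enough that $\kappa\sum_{j\ge k}N_j^{-1}<\tfrac12\eta_k$ for every $k\ge 3$ while $\kappa\sum_{j\ge 2}N_j^{-1}<\inf f-\prod_{j\ge 2}\rho_j$. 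Then for $k\ge 3$ one gets $B_k<(1-\eta_k)+\tfrac12\eta_k=1-\tfrac12\eta_k<f(\ell_{k-2})$, and $B_2=\prod_{j\ge 2}\rho_j+\kappa\sum_{j\ge 2}N_j^{-1}<\inf f=f(\ell_0)$, as required. I expect the genuinely delicate point to be arranging $B_k$ to hold all the way down to $|I|$ of order $\ell_{k-1}$: this is why one must iterate the recursion a couple of levels below the scale of $I$, so that the $O(\ell_{n-1}/N_n)$ error terms stay lower order and the bound does not degenerate near $|I|\approx\ell_{k-1}$. The remainder is the balancing act: remove enough mass on coarse scales to get below the fixed constant $\inf f$, yet remove almost nothing eventually so that $|C|>0$; this is exactly what having $f(x)\to1$ makes compatible.
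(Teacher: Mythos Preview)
Your argument is correct and takes a genuinely different route from the paper's. Both begin identically --- Lebesgue density for (ii)$\Rightarrow$(i), and replacing $f$ by its non-increasing minorant for (i)$\Rightarrow$(ii) --- but then diverge. The paper builds a \emph{binary} Cantor set (branching $2$ at every stage, removal fractions $\lambda_n$) and invests its effort in two structural lemmas: one showing that the maximal density $\phi_C(s)=\sup_{|I|=s}|C\cap I|/|I|$ is attained at $I=[0,s]$, and another that $\phi_C(s)\le\phi_C(r_n)=\prod_{j>n}(1-\lambda_j)$ on each scale band $r_n\le s<r_{n-1}$. This exact identification of $\phi_C(r_n)$ means a single parameter sequence $\{\lambda_n\}$ suffices, and it is reused later in the paper. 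You instead introduce a second free parameter, the branching numbers $N_n$, and replace the exact computation by the soft recursion $|C_n\cap I|\le\rho_n|C_{n-1}\cap I|+O(\ell_n)$ (at most two level-$(n{-}1)$ components are cut by $\partial I$, each contributing an overcount of at most $\ell_n$), which iterates to $|C\cap I|/|I|\le\prod_{j\ge k}\rho_j+\kappa\sum_{j\ge k}N_j^{-1}$ once $|I|\ge\ell_{k-1}$; the error tail is then killed by taking $N_n$ large. The trade-off is clear: the paper's approach is sharper and yields information about $\phi_C$ of independent interest, but requires a four-case geometric analysis; yours avoids that analysis entirely at the price of an extra degree of freedom and a cruder bound. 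Two small points to tighten in a final write-up: the ``basic local bound'' is only literally justified by periodicity for $W$ lying inside a single level-$(n{-}1)$ component, so the clean form of the recursion really comes from the ``at most two boundary components'' observation rather than from global periodicity; and the inductive choice of $N_n$ must be organized so that the tail constraints $\kappa\sum_{j\ge k}N_j^{-1}<\tfrac12\eta_k$ hold for \emph{every} $k$ simultaneously --- for instance by also imposing $N_n\ge 2N_{n-1}$ and $N_n\ge 4\kappa/\eta_n$, so that $\sum_{j\ge k}N_j^{-1}\le 2/N_k\le\eta_k/(2\kappa)$.
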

	In order to prove the above result, in Section \ref{section proof} we define the maximal density function of a measurable set (see (\ref{densityfunction})). Then, we construct a family of Cantor sets whose maximal density functions present a good behavior. The study of this behavior will be broken up into several lemmata. 
	
	Finally, in Section \ref{section3} we study Lipschitz functions defined on $C^1$ curves. Recall that given a metric space $(M,d)$, a function $f\colon M\longrightarrow \mathbb{R}$ is \textit{Lipschitz} if there is $k>0$ for which
	\begin{equation}\label{lipschitz} |f(p)-f(q)|\leq k d(p,q)  \quad \forall \, p,q \in M.
	\end{equation}
	The least constant satisfying (\ref{lipschitz}) is called the \textit{Lipschitz constant} of $f$, denoted by $\operatorname{L}(f)$, and is given by
	\begin{equation}\label{lipschitzconst} \operatorname{L}(f)=\sup\left \{\frac{|f(p)-f(q)|}{d(p,q)} \colon p\neq q \in M\right \}.
	\end{equation}
	We say that a Lipschitz function $f\colon M\longrightarrow \mathbb{R}$ \textit{attains its Lipschitz constant} if the supremum in (\ref{lipschitzconst}) is attained at a pair of distinct points $(p,q)$ of $M$. As an application of Theorem \ref{theo:Cantor}, Section \ref{section3} is devoted to prove the following result.
	
	\begin{theorem}\label{theo:C1} Let $E$ be a normed space, $J \subseteq \mathbb{R}$ an interval, $\alpha\colon J \longrightarrow E$ a $C^1$ curve with $\alpha'$ nonidentically zero, and $\Gamma \subseteq E$ its range. Then, there exists a Lipschitz function $H\colon \Gamma \longrightarrow \mathbb{R}$ that cannot be approximated by Lipschitz functions from $\Gamma$ to $\mathbb{R}$ attaining their Lipschitz constant.
	\end{theorem}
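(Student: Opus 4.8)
The plan is to deduce Theorem~\ref{theo:C1} from Theorem~\ref{theo:Cantor} by transporting a ``Cantor staircase'' from an interval to the curve.

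\emph{Step 1 (localisation).} Since $\alpha$ is $C^1$ and $\alpha'$ is not identically zero, pick $t_0$ with $\alpha'(t_0)\neq0$. Writing $\alpha(s)-\alpha(t)=(s-t)\alpha'(t_0)+\int_t^s(\alpha'(r)-\alpha'(t_0))\,dr$ and using continuity of $\alpha'$, one finds a compact interval $K\ni t_0$, $K\subseteq J$, on which $\alpha$ is a bi-Lipschitz embedding onto $\Gamma_0:=\alpha(K)\subseteq\Gamma$. Reparametrising $\Gamma_0$ by arc length gives a $C^1$ curve $\beta\colon[0,L]\to E$ with $\|\beta'\|\equiv1$ and $\beta([0,L])=\Gamma_0$; set $\rho(u,v):=\|\beta(u)-\beta(v)\|$, so $\rho(u,v)\le|u-v|$ for all $u,v$ and, by uniform continuity of $\beta'$, $\rho(u,v)\ge(1-\omega(|u-v|))|u-v|$ with $\omega$ a nondecreasing modulus, $\omega(0^+)=0$; also $\bar\rho(x):=\min_{|u-v|=x}\rho(u,v)>0$ for $0<x\le L$ and $\bar\rho(x)/x\to1$ as $x\to0^+$. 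Thus $\beta$ is bi-Lipschitz and ``asymptotically an isometry near the diagonal''. If $\Gamma_0$ lies on an affine line, $(\Gamma_0,\|\cdot\|)$ is isometric to an interval and the cleaner argument in Step~4 applies; so assume $\alpha$ is nowhere affine on $K$, equivalently $\omega(x)>0$ for every $x>0$.

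\emph{Step 2 (the bad function).} Choose $f$ satisfying (i) of Theorem~\ref{theo:Cantor} and, in addition, $f(x)\le\min\{1-3\widetilde\omega(x),\ \bar\rho(x)/x\}$ for $x\in(0,L]$, where $\widetilde\omega:=\min\{\omega,\tfrac16\}$; such an $f$ exists since $\widetilde\omega(x)\to0$ and $\bar\rho(x)/x\to1$ as $x\to0^+$ and both quantities are bounded below by a positive constant (rescale to fit the domain $[0,1]$). Theorem~\ref{theo:Cantor} gives a measurable $C\subseteq(0,L)$ of positive measure, at positive distance from $\{0,L\}$, with $|C\cap I|<f(|I|)|I|$ for every interval $I$. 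Put $h(u):=|C\cap[0,u]|$, so $h'=\mathbf{1}_C$ a.e.\ and $\operatorname{L}_{|\cdot|}(h)=1$, and $H_0:=h\circ\beta^{-1}\colon\Gamma_0\to\mathbb{R}$. Since $|C\cap[u,v]|<f(v-u)(v-u)\le\bar\rho(v-u)\le\rho(u,v)$, every $\rho$-slope of $H_0$ is $<1$, while at a Lebesgue density point of $C$ these slopes tend to $1$; hence $\operatorname{L}(H_0)=1$ and $H_0$ does \emph{not} attain its Lipschitz constant. Finally extend $H_0$ to a bounded Lipschitz $H\colon\Gamma\to\mathbb{R}$ with $\operatorname{L}(H)=1$, again not attaining its Lipschitz constant, and arranged so that any near-optimal pair for $H$ lies in $\Gamma_0$; here one exploits that $C$ avoids the endpoints, so $H_0$ is constant near $\partial\Gamma_0$, and one must avoid the McShane extension (which attains slope $1$ just outside $\Gamma_0$).

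\emph{Step 3 (non-approximability).} Suppose, for contradiction, that $g_n\in\SA(\Gamma)$ with $\varepsilon_n:=\operatorname{L}(g_n-H)\to0$, $g_n$ attaining at $(p_n,q_n)$, $p_n\neq q_n$. Boundedness of $H$ gives $(\operatorname{L}(g_n)-\varepsilon_n)\|p_n-q_n\|\le\operatorname{diam}(\operatorname{range}H)$, so $\|p_n-q_n\|$ is bounded; since $\operatorname{L}(g_n)\to1$ and near-optimal pairs for $H$ stay in $\Gamma_0$, one reduces (this is the delicate point, see Step~4) to $(p_n,q_n)\in\Gamma_0^2$, whence $g_n|_{\Gamma_0}$ attains $\operatorname{L}(g_n|_{\Gamma_0})=\operatorname{L}(g_n)=:M_n\in[1-\varepsilon_n,1+\varepsilon_n]$ exactly at $(p_n,q_n)=(\beta(u_n),\beta(v_n))$; and since $H_0$ does not attain its Lipschitz constant, compactness of $\Gamma_0$ forces $\delta_n:=|v_n-u_n|\to0$. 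Writing $\widetilde g_n:=g_n\circ\beta$ and using $|\widetilde g_n'|\le\operatorname{L}_{|\cdot|}(\widetilde g_n)\le\operatorname{L}_\rho(\widetilde g_n)=M_n$ a.e.,
\[
\int_{u_n}^{v_n}\!\bigl(M_n-|\widetilde g_n'|\bigr)=M_n\delta_n-\int_{u_n}^{v_n}\!|\widetilde g_n'|\le M_n\delta_n-\Bigl|\int_{u_n}^{v_n}\!\widetilde g_n'\Bigr|=M_n\bigl(\delta_n-\rho(u_n,v_n)\bigr)\le M_n\delta_n\,\omega(\delta_n).
\]
On $[u_n,v_n]\setminus C$ one has $\widetilde g_n'=(\widetilde g_n-h)'$, so $|\widetilde g_n'|\le\varepsilon_n<M_n-\tfrac12$ for large $n$; Chebyshev then yields $|[u_n,v_n]\setminus C|\le2M_n\delta_n\omega(\delta_n)$, hence $|C\cap[u_n,v_n]|\ge\delta_n\bigl(1-2M_n\omega(\delta_n)\bigr)\ge\delta_n\bigl(1-3\omega(\delta_n)\bigr)$ eventually. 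But $|C\cap[u_n,v_n]|<f(\delta_n)\delta_n\le\bigl(1-3\omega(\delta_n)\bigr)\delta_n$ once $\omega(\delta_n)\le\tfrac16$ — a contradiction. Therefore $H$ cannot be approximated by members of $\SA(\Gamma)$.

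\emph{Step 4 (where the work is).} Two points are not routine. First, on a genuinely curved arc ``attaining the Lipschitz constant at $(u_0,v_0)$'' no longer forces $|\widetilde g'|\equiv\operatorname{L}(\widetilde g)$ on $[u_0,v_0]$ (hence $[u_0,v_0]\subseteq C$ up to a null set); it only controls the \emph{average} of $|\widetilde g'|$ there, and the whole machinery of the chord deficit $\delta-\rho(u,v)\le\delta\,\omega(\delta)$ and of the factor $3\omega$ in the choice of $f$ is what compensates for this and still delivers the contradiction — this is the conceptual heart. Second, the reduction to $(p_n,q_n)\in\Gamma_0^2$ requires confining the extension $H$: this is immediate when $\Gamma_0$ is relatively open in $\Gamma$ (e.g.\ $\alpha$ injective and proper), but in general $\Gamma$ need not be compact and $\alpha$ may revisit a neighbourhood of the interior of $\Gamma_0$, so one needs a careful choice of $K$ (and of $H$ on $\Gamma\setminus\Gamma_0$) to keep the pathology spatially isolated. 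When $\Gamma_0$ is a line segment, $\rho=|\cdot|$, the chord deficit vanishes, and one replaces the Chebyshev step by ``attaining $\Rightarrow|\widetilde g_n'|\equiv M_n$ on $[u_n,v_n]\Rightarrow[u_n,v_n]\subseteq C$ up to a null set'', contradicting $|C\cap[u_n,v_n]|<f(\delta_n)\delta_n<\delta_n$.
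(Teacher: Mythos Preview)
Your core argument on the local arc $\Gamma_0$ is correct and takes a genuinely different route from the paper. The paper builds \emph{two} functions on $\Gamma_0$: first $F(\alpha(t))=\int_0^t\chi_C$, which has $\|F\|_L=1$ but does not attain; then $H(\alpha(t))=\int_0^t(\chi_C-\tfrac18\chi_{[0,\rho]\setminus C})$. The penalty term $-\tfrac18\chi_{[0,\rho]\setminus C}$ forces any $L_n$ with $\|L_n-H\|_L<\tfrac18$ to satisfy $(\Phi L_n)'\le\chi_C$ a.e.\ after pulling back to $[0,\rho]$, so that an attainment of $L_n$ with slope $\ge1$ would make $F$ attain as well. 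Your Chebyshev estimate $\int_{u_n}^{v_n}(M_n-|\widetilde g_n'|)\le M_n\delta_n\omega(\delta_n)$, together with $|\widetilde g_n'|\le\varepsilon_n$ off $C$, bypasses the penalised auxiliary function entirely and yields $|C\cap[u_n,v_n]|\ge(1-3\omega(\delta_n))\delta_n$ directly, contradicting the built-in margin $f(\delta_n)\le1-3\widetilde\omega(\delta_n)$. This is clean and handles both signs of the extremal slope at once; the price is that the margin $3\widetilde\omega$ must be loaded into $f$ from the start, whereas the paper only needs $f$ to be the chord-ratio infimum $g$.

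The genuine gap is the passage from $\Gamma_0$ to $\Gamma$. You correctly flag it as ``the delicate point'' but do not carry it out: you \emph{postulate} an extension $H$ of $H_0$ with $\operatorname{L}(H)=1$, not attaining, and with every near-optimal pair confined to $\Gamma_0$, without constructing one. Making $H_0$ constant near $\partial\Gamma_0$ and avoiding McShane is not enough; when $\Gamma\setminus\Gamma_0$ accumulates on the interior of $\Gamma_0$, confining all near-optimal pairs of \emph{any} extension is exactly what needs proof. The paper's device (Lemma~\ref{lemma:open}) is worth internalising: rather than extend $H_0$ carefully, it first shows $H\notin\overline{\SA(\Gamma_0,\mathbb{R})}$ with a norming point $p_0$ in the interior of $\Gamma_0$, takes an arbitrary McShane extension $f$ to $\Gamma$, and multiplies by a radial cutoff $\varphi$ equal to $1$ near $p_0$ and slightly less than $1$ far away, with $\|\varphi\|_L$ small. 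The product $g=f\varphi$ stays $\delta$-close to $f$ (so $g|_{\Gamma_0}\notin\overline{\SA(\Gamma_0,\mathbb{R})}$), and a three-case analysis on where the extremal pair of a putative approximant sits relative to the cutoff annulus rules out attainment on all of $\Gamma$. Something of this kind is needed to close your argument.
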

	
	The meaning of "approximated" in the above result is explained in more detail in Section \ref{section3}, where we give a little background in Lipschitz functions. Theorem \ref{theo:C1} generalizes Theorem 2.1 in \cite{chiclana2021examples}, where the same result is shown for the unit circumference in $\mathbb{R}^2$ endowed with the Euclidean metric, and provides many new examples of metric spaces for which not every Lipschitz function can be approximated by Lipschitz functions attaining their Lipschitz constant. Moreover, it gives a partial answer for Remark 2.4 in \cite{chiclana2021examples}, that asks whether there is a distance $d'$ on $[0,1]$ equivalent to the usual one, such that every Lipschitz function from $([0,1],d')$ to $\mathbb{R}$ can be approximated by Lipschitz functions attaining their Lipschitz constant. Theorem \ref{theo:C1} shows that this is not the case when the distance $d'$ comes from a $C^1$ curve.
	
	\section{Proof of the main result}\label{section proof}
	
	Let $C \subseteq[0,1]$ be a measurable set of positive measure. Notice that the value of the ratio (\ref{ratio}) for two intervals $I$ and $I'$ of the same length may be different. In order to get an upper bound in terms of the length of the intervals, let the \textit{maximal density function} of $C$ be the function $\phi_C\colon (0,1] \longrightarrow \mathbb{R}$ defined by
	\begin{equation}\label{densityfunction} \phi_C(s)=\sup\left \{\frac{|C\cap I|}{|I|} \colon I\subseteq [0,1] \mbox{ is an interval of length } s\right \} \quad \forall \, s \in (0,1].
	\end{equation}
	Consider a sequence of real numbers $\{\lambda_n\}_{n\in\mathbb{N}}$ with $0<\lambda_n<1$ for every $n \in \mathbb{N}$. Associated to this sequence, we are going to construct a Cantor set. Consider $C_0=[0,1]$. Divide $C_0$ into two pieces: $[0,\frac12]$ and $[\frac12,1]$, and consider in each of them, starting from the left, two intervals $I_{1,1}$, $I_{1,2}$, of length $\frac{|C_0|}{2}(1-\lambda_1)$. Then, define $C_1=I_{1,1}\cup I_{1,2}$, that is, $C_1=[0,\frac{1}{2}(1-\lambda_1)]\cup[\frac{1}{2},\frac{1}{2}+\frac{1}{2}(1-\lambda_1)]$. Now, divide each connected component of $C_1$ into two new pieces of the same length and consider in each of them, starting from the left, two intervals of length $\frac{|C_1|}{4}(1-\lambda_2)$. Then, define $C_2$ to be the union of the new intervals that we have constructed. Repeating this process, we construct $C_n \subseteq [0,1]$ as a finite union of closed intervals, for every $n \in \mathbb{N}$. Finally, we define the Cantor set associated to $\{\lambda_n\}_{n\in\mathbb{N}}$ by $$C=\bigcap_{n\in \mathbb{N}} C_n.$$
	See the figure below to get an idea of the shape of these Cantor sets.
	
	\begin{figure}[htb]
		\centering
		\includegraphics[scale=12]{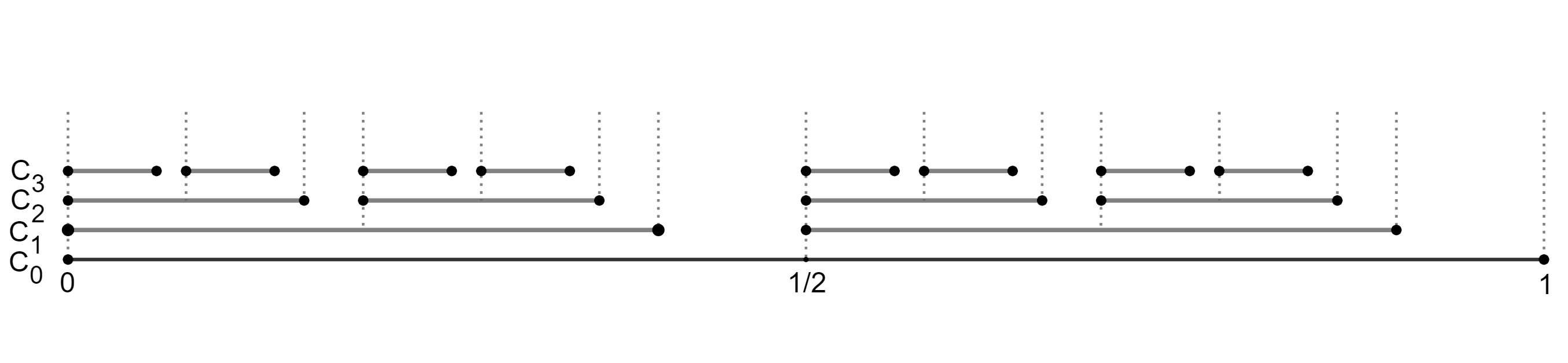}
		\caption{Shape of the Cantor set $C$ on its first levels.}
	\end{figure}
	
	We need some notation to study the structure of these Cantor sets. For any $n \in \mathbb{N}\cup\{0\}$, $C_n$ is the union of $2^n$ closed intervals of the same length. Denote $I_n$ the first interval composing $C_n$. Also, write $r_n$ for the length of $I_n$ and $g_n$ for the length of the gap that we find right after the interval $I_n$ in $C_n$. The following lemma gives us some measurements that we will need.
	
	\begin{lemma}\label{lemma:1}
		Given $\{\lambda_n\}_{n \in \mathbb{N}}\subseteq (0,1)$, let $C$ be the Cantor set associated to $\{\lambda_n\}_{n \in \mathbb{N}}$. Then,
		\begin{enumerate}[(i)]
			\item $r_n=\frac{1}{2^n}\prod_{i=1}^n(1-\lambda_i)$ for every $n \in \mathbb{N}$.
			\item $|C|=\prod_{i=1}^\infty (1-\lambda_i).$
			\item $g_n=\frac{1}{2}r_{n-1}\lambda_n$ for every $n \in \mathbb{N}$.
		\end{enumerate}
	\end{lemma}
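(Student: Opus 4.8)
The plan is to reduce everything to a single recursion for the lengths $r_n$ and then read off the three statements. First I would record the bookkeeping identity $|C_n| = 2^n r_n$ for every $n \in \mathbb{N}\cup\{0\}$, which holds because by construction $C_n$ is a disjoint union of $2^n$ closed intervals each of length $r_n$ (with $r_0 = 1$). Passing from $C_n$ to $C_{n+1}$, we split each of the $2^n$ components of $C_n$ into two halves of length $r_n/2$ and keep, in each half, a left-aligned subinterval of length $\frac{|C_n|}{2^{n+1}}(1-\lambda_{n+1})$; substituting $|C_n| = 2^n r_n$ gives the recursion $r_{n+1} = \frac12 r_n(1-\lambda_{n+1})$, equivalently $r_n = \frac12 r_{n-1}(1-\lambda_n)$ for $n \geq 1$. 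A one-line induction from $r_0 = 1$ then yields $r_n = \frac{1}{2^n}\prod_{i=1}^n(1-\lambda_i)$, which is (i).

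For (ii), I would note that the subintervals chosen at each stage lie inside the component they were carved from, so $C_{n+1}\subseteq C_n$ and $\{C_n\}$ is a decreasing sequence of measurable sets of finite measure with $C = \bigcap_n C_n$. Continuity of Lebesgue measure from above then gives $|C| = \lim_n |C_n| = \lim_n 2^n r_n = \lim_n \prod_{i=1}^n(1-\lambda_i) = \prod_{i=1}^\infty(1-\lambda_i)$, the infinite product being understood as this limit (possibly $0$).

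For (iii), I would track the position of the first interval $I_n$ of $C_n$ and of its right neighbour. Since at every stage we keep left-aligned subintervals starting from the left, both $I_n$ and $I_{n-1}$ have left endpoint $0$; moreover $I_n$ is the left-aligned length-$r_n$ subinterval of the first half $[0, r_{n-1}/2]$ of $I_{n-1}$, while the second interval of $C_n$ is the left-aligned length-$r_n$ subinterval of the second half $[r_{n-1}/2, r_{n-1}]$. Hence the gap right after $I_n$ runs from $r_n$ to $r_{n-1}/2$ and has length $g_n = \frac{r_{n-1}}{2} - r_n = \frac{r_{n-1}}{2} - \frac12 r_{n-1}(1-\lambda_n) = \frac12 r_{n-1}\lambda_n$, using the recursion from the first step.

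The only point requiring genuine care is reconciling the recursion with the somewhat informal description of the construction — in particular getting the factor $\frac{|C_n|}{2^{n+1}}$ right — and checking that the chosen subintervals actually fit inside the halves, i.e. that $r_{n+1}\leq r_n/2$, which is immediate since $0 < \lambda_{n+1} < 1$. After that, (i) is an induction, (ii) is an appeal to continuity of measure, and (iii) is a short computation with the endpoints.
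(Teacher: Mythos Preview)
Your proof is correct and follows essentially the same approach as the paper: both derive the recursion $r_{n+1}=\tfrac12 r_n(1-\lambda_{n+1})$ from the construction and induct for (i), use $|C_n|=2^n r_n$ and pass to the limit for (ii), and compute $g_n$ from the decomposition of $I_{n-1}$ for (iii). The only cosmetic differences are that the paper phrases (iii) via the identity $r_{n-1}=2r_n+2g_n$ rather than your endpoint calculation $g_n=r_{n-1}/2-r_n$, and that you make the appeal to continuity of measure from above explicit where the paper simply takes the limit.
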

	
	\begin{proof}
		To prove (i), we work by induction on $n$. For $n=1$ we have $r_1=\frac{1}{2}(1-\lambda_1)$ by construction. Now, suppose the statement true for $n \in \mathbb{N}$. By definition, to obtain $I_{n+1}$ we divide $I_n$ into two pieces of the same length, and then $I_{n+1}$ is the interval starting at $0$ of length $\frac{|I_n|}{2}(1-\lambda_{n+1})$. Therefore,
		\[r_{n+1}=|I_{n+1}|=\frac{|I_n|}{2}(1-\lambda_{n+1})=\frac{r_n}{2}(1-\lambda_{n+1})=\frac{1}{2^{n+1}}\prod_{i=1}^{n+1}(1-\lambda_i).\]
		Next, notice that $C_{n}$ is composed of $2^{n}$ disjoint closed intervals of the same length. Hence,
		\[ |C_n|=2^n r_n=\prod_{i=1}^n (1-\lambda_i) \quad \forall \, n \in \mathbb{N}.\]
		We just need to take limit as $n$ goes to infinity in the above equality to obtain (ii). Finally, notice that $r_{n-1}=2r_n+2g_n$ for any $n \in \mathbb{N}$, from where we deduce that
		\begin{align*} g_n&=\frac{1}{2}(r_{n-1}-2r_n)=\frac{1}{2}\left(\frac{1}{2^{n-1}}\prod_{i=1}^{n-1}(1-\lambda_i)-\frac{2}{2^n}\prod_{i=1}^n(1-\lambda_i)\right)=\frac{1}{2}r_{n-1}\lambda_n.\qedhere
		\end{align*}
	\end{proof}

	The interesting case is when the Cantor set $C$ associated to $\{\lambda_n\}_{n\in\mathbb{N}}$ has positive measure. In view of Lemma \ref{lemma:1}, this will happen when $\prod_{n=1}^\infty (1-\lambda_n) >0$, or equivalently, $\sum_{n=1}^\infty \lambda_n < \infty$. 
	
	The next lemma gives an explicit formula for the maximal density function of $C$.
	
	\begin{lemma}\label{lemma:s} Given $\{\lambda_n\}_{n \in \mathbb{N}}\subseteq (0,1)$, let $C$ be the Cantor set associated to $\{\lambda_n\}_{n \in \mathbb{N}}$. Then,
		\[ \phi_C(s)=\frac{|C\cap [0,s]|}{s} \quad \forall \, s \in (0,1].\]
		Consequently, $\phi_C$ is a continuous function.
	\end{lemma}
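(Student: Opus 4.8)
The plan is to prove the equivalent assertion that among all subintervals $I\subseteq[0,1]$ of a fixed length $s$, the interval $[0,s]$ maximises $\frac{|C\cap I|}{|I|}$. Once this is established the stated formula is immediate, and the continuity of $\phi_C$ follows at once: the function $g(x):=|C\cap[0,x]|$ is $1$-Lipschitz, hence continuous, so $\phi_C(s)=g(s)/s$ is continuous on $(0,1]$. Writing an interval of length $s$ as $I=[a,a+s]$, the assertion ``$[0,s]$ is a maximiser'' reads $g(a+s)-g(a)\le g(s)$ for all admissible $a$, i.e. exactly that $g$ is subadditive on $[0,1]$; so the real task is to prove subadditivity of $g$.

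I would exploit the self-similar structure of the construction. Put $\rho=\tfrac12(1-\lambda_1)\in(0,\tfrac12)$ and let $C'$ be the Cantor set attached to the shifted sequence $\{\lambda_{n+1}\}_{n\in\mathbb{N}}$. For every finite level $N\ge1$ the construction yields
\[ C_N=\rho\,C'_{N-1}\;\sqcup\;\Big(\tfrac12+\rho\,C'_{N-1}\Big), \]
with $\rho\,C'_{N-1}\subseteq[0,\rho]$ and $\tfrac12+\rho\,C'_{N-1}\subseteq[\tfrac12,\tfrac12+\rho]$; in particular $C_N\cap[\tfrac12,1]=\tfrac12+\big(C_N\cap[0,\tfrac12]\big)$, whence, with $g_N(x):=|C_N\cap[0,x]|$, one has $g_N(x+\tfrac12)=g_N(\tfrac12)+g_N(x)$ for $x\in[0,\tfrac12]$ and $g_N(\tfrac12)=\tfrac12|C_N|$. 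I would then prove, by induction on $N$ and simultaneously for all parameter sequences in $(0,1)$, the two extremal statements: \textit{(a)} $|C_N\cap I|\le|C_N\cap[0,|I|]|$ for every interval $I$, and \textit{(b)} $|C_N\cap I|\ge|C_N\cap[1-|I|,1]|$ for every interval $I$. The base case $N=0$ is trivial since $C_0=[0,1]$. In the inductive step one distinguishes whether $I$ lies in $[0,\tfrac12]$, lies in $[\tfrac12,1]$, or straddles $\tfrac12$; the first two cases reduce---after rescaling by $\rho$, respectively translating by $\tfrac12$---to (a) and (b) for $C'_{N-1}$, and the straddling case is treated by writing $|C_N\cap I|=g_N(a+s)-g_N(a)$, using $g_N(y)=g_N(\tfrac12)+g_N(y-\tfrac12)$ for $y\in[\tfrac12,1]$ to reduce everything to values of $g_N$ on $[0,\tfrac12]$, and invoking (a) and (b) for $C'_{N-1}$ (and, only when deriving (b) for $C_N$, the instance of (a) for $C_N$ already obtained). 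Finally $C_N\downarrow C$ and continuity from above of Lebesgue measure give $g(x)=\lim_N g_N(x)$ pointwise, so $g$ is subadditive, which is what we wanted.

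The main obstacle is the straddling case in the inductive step. It is only a finite case distinction, but it is delicate: the naive hope that translating an interval leftward never decreases its intersection with $C$ is false---a short interval can land inside a gap---so the estimate must be funnelled through the extremal positions alone, and this forces the two statements (a) and (b) to be carried along together, since each is used in the proof of the other.
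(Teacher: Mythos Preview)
Your proposal is correct and takes a genuinely different route from the paper.

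The paper works directly with the limiting set $C$: for a given $s$ it picks the scale $n$ with $r_n\le s<r_{n-1}$ and proves a reduction claim---given any interval $I$ of length $s$, one can produce an interval $I^*$ of length $s^*<r_n$ with $|C\cap I|-|C\cap[0,s]|\le|C\cap I^*|-|C\cap[0,s^*]|$. Iterating drives the right-hand side to $0$. The case analysis is on how many connected components of $C_n$ the interval $I$ meets (none, one, two, or three), and the argument stays entirely on the ``max'' side; no companion ``min'' statement is needed.

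Your approach recasts the goal as subadditivity of $g(x)=|C\cap[0,x]|$, then runs an induction on the construction level $N$, simultaneously over all parameter sequences, using the two-piece self-similarity $C_N=\rho\,C'_{N-1}\sqcup(\tfrac12+\rho\,C'_{N-1})$. The case split is geometric (left half, right half, straddling $\tfrac12$), and you carry along both the maximiser statement (a) and the minimiser statement (b). That coupling is genuinely necessary in your scheme: in the straddling case with $s\le\tfrac12$ and $a<\rho$, the piece $|C_N\cap[a,\rho]|$ corresponds after rescaling to a \emph{right-anchored} interval for $C'_{N-1}$, and it is (b) for $C'_{N-1}$ that lets you slide it next to $[0,v/\rho]$ and absorb both into $[0,s/\rho]$ (here one also uses $v\le a$, which follows from $s\le\tfrac12$). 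So your diagnosis of the main obstacle is accurate and your remedy is the right one.

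What each approach buys: the paper's argument is more direct and avoids introducing the auxiliary extremal statement (b), at the price of a slightly heavier bookkeeping of components and gaps at scale $n$. Your argument has a cleaner conceptual wrapper (subadditivity plus self-similarity) and yields the bonus fact that $[1-s,1]$ is the minimiser, but it demands the simultaneous induction and a few more sub-cases in the straddling step than your sketch makes explicit (in particular the split $s\le\tfrac12$ versus $s>\tfrac12$, and $a<\rho$ versus $a\ge\rho$). Both are complete once those details are filled in.
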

	
	\begin{proof}
		If $s=1$, there is nothing to prove. Fix $s\in (0,1)$ and consider $n \in \mathbb{N}$ so that $r_n\leq s < r_{n-1}$. We want to show that the supremum in (\ref{densityfunction}) is attained at the interval $[0,s]$, that is,
		\[ |C\cap I|\leq|C\cap [0,s]| \quad \mbox{ for any interval } I \subseteq [0,1] \mbox{ of length } s.\]
		Claim: Given $I\subseteq[0,1]$ an interval of length $s<1$, pick $n \in \mathbb{N}$ such that $r_n\leq s <r_{n-1}$. Then, there is an interval $I^*$ of length $s^*<r_n$ satisfying
		\[ |C\cap I|-|C\cap [0,s]| \leq |C\cap I^*| - |C\cap [0,s^*]|.\]
		If the claim is true, then we can apply it $k$ times to obtain an interval $I^*_k$ of length $s^*_k<r_{n+k-1}$ satisfying
		\[ |C\cap I|-|C\cap [0,s]| \leq |C\cap I_k^*| - |C\cap [0,s^*_k]|\leq|C\cap I_k^*|\leq r_{n+k-1} .\]
		Since $\{r_n\}_{n \in \mathbb{N}}$ converges to $0$, we conclude that $|C\cap I|\leq |C\cap [0,s]|$.
		
		Let us prove the claim. We distinguish four cases:
		
		\begin{itemize}
			\item Case 1: $I$ does not intersect any connected component of $C_n$. This implies that $C\cap I=\emptyset$ and so $|C\cap I|=0$. Hence, we can take $I^*$ any interval of length smaller than $r_n$.
			\item Case 2: $I$ intersects exactly one connected component of $C_n$. Write $J$ for that connected component. Then, $I\setminus J$ lies on a gap. Recall that $I_n=[0,r_n]$ and $s\geq r_n$, which implies that
			\[ |C\cap I|=|C\cap(I\cap J)|\leq |C\cap J|=|C\cap I_n|\leq |C\cap [0,s]|.\]
			Therefore, $|C\cap I|-|C\cap[0,s]|\leq 0$, so we can set $I^*=I_{n+1}$.
			\item Case 3: $I$ intersects exactly two connected components of $C_n$. Going from left to right, denote such components by $J_1$ and $J_2$. Also, let us write $I\cap J_1=[a_1,b_1]$ and $I\cap J_2=[a_2,b_2]$. The shape of $C$ allows to identify $I\cap J_2$ with a subinterval of $I_n$ starting at $0$. More precisely,
			\[ |C\cap (I\cap J_2)|=|C\cap[a_2,b_2]|=|C\cap[0,b_2-a_2]|,\]
			Analogously, we can identify $I\cap J_1$ with a subinterval of $I_n$ ending at $r_n$. Indeed,
			\[ |C\cap (I\cap J_1)|=|C\cap[a_1,b_1]|=|C\cap [r_n-(b_1-a_1),r_n]|.\]
			Write $\ell=(b_1-a_1)+(b_2-a_2)$. If $\ell\leq r_n$, then we have that $I\cap J_1$ and $I\cap J_2$ can be identified with two subintervals of $I_n$ whose intersection is either empty or a single point. Therefore,
			\begin{align*}
				|C\cap I|&= |C\cap(I\cap J_1)|+ |C\cap(I\cap J_2)|=
				|C\cap [0,b_2-a_2]|+|C\cap[r_n-(b_1-a_1),r_n]|\\
				&\leq |C\cap I_n|\leq |C\cap[0,s]|,
			\end{align*}
			from where we get $|C\cap I|-|C\cap[0,s]|\leq 0$, so we can set $I^*=I_{n+1}$. If $\ell>r_n$, write
			\[I_0^*=[0,b_2-a_2] \cap [r_n-(b_1-a_1),r_n] = [r_n-(b_1-a_1),b_2-a_2]. \]
			Set $s^* =\ell-r_n$ to be the length of the interval $I_0^*$. Since $s \geq \ell+g_n$, we have $s-(r_n+g_n)\geq s^*$.
			If $s^*\geq r_n$ then we would have that $s - (r_n+g_n)\geq r_n$, and so $s\geq 2r_n+g_n$. However, this implies
			\[ |C\cap [0,s]|\geq2|C\cap I_n|\geq |C\cap I|.\]
			Therefore, $|C\cap I|-|C\cap [0,s]|\leq 0$, so we can set $I^*=I_{n+1}$. Consequently, we may suppose $s^*<r_n$. One the one hand, notice that
			\begin{align*}
				|C\cap I|&=|C\cap(I\cap J_1)|+ |C\cap(I\cap J_2)|
				=|C\cap I_n|+|C\cap I^*_0|.
			\end{align*}
			On the other hand,
			\[ |C\cap [0,s]|=|C\cap I_n|+|C\cap [r_n+g_n,s]|=|C\cap I_n|+|C\cap [0,s-(r_n+g_n)]|. \]
			Moreover, since $s-(r_n+g_n)\geq s^*$, we have $|C \cap [0,s]|\geq |C\cap I_n|+|C\cap[0,s^*]|$, that yields
			\[ |C\cap I|-|C\cap [0,s]|\leq |C\cap I_0^*| - |C\cap [0,s^*]|.\]
			Therefore, we can set $I^*=I^*_0$.
			\item Case 4: $I$ intersects three or more connected components of $C_n$. First, $I$ cannot intersect four connected components, since in such case we would have
			\[ s\geq 2r_n+2g_n=r_{n-1}.\] Therefore, $I$ intersects exactly three connected components of $C_n$. Going from left to right, denote such components by $J_1$, $J_2$, and $J_3$. If the intersection with any of them is just a point, then in terms of Lebesgue measure, we may assume that $I$ actually intersects only two intervals, case that we have already studied. Write $I\cap J_1=[a_1,b_1]$, $I\cap J_3=[a_3,b_3]$. Then, we have
			\begin{align*}|C\cap I|&=|C\cap (I\cap J_1)|+|C\cap (I\cap J_2)|+|C\cap (I\cap J_3)|\\
				&=|C\cap (I\cap J_1)|+|C\cap I_n|+|C\cap (I\cap J_3)|.
			\end{align*}
			Let us write $\ell=(b_1-a_1)+(b_3-a_3)$. Then we must have that $\ell<r_n$. In fact, notice that
			\begin{equation}\label{l2eq1} s\geq (b_1-a_1) + g_n + r_n + g_n + (b_3-a_3)=\ell+r_n+2g_n.
			\end{equation}
			If we suppose $\ell\geq r_n$, then $s\geq 2r_n+2g_n=r_{n-1}$, a contradiction. Now, identify $I\cap J_3$ with a subinterval of $[r_n+g_n,2r_n+g_n]$ of the same length as $I\cap J_3$ starting at $r_n+g_n$. More precisely,
			\[ |C\cap (I\cap J_3)|=|C\cap[a_3,b_3]|=|C\cap [r_n+g_n,r_n+g_n + (b_3-a_3)]|,\]
			Also, we can identify $I \cap J_1$ with a subinterval of $I_n$ ending at $r_n$ of the same length as $I\cap J_1$.
			\begin{align*}|C\cap (I\cap J_1)|&=|C\cap [a_1,b_1]|=|C\cap [r_n-(b_1-a_1),r_n]|.
			\end{align*}
			Then, if we consider a new interval $I^*=[r_n-(b_1-a_1),r_n+g_n+(b_3-a_3)]$ we will have that $|C\cap (I\cap J_1)|+|C\cap (I\cap J_3)|= |C\cap I^*|$, and so
			\begin{equation}\label{l2eq2}
				|C\cap I|=|C\cap I_n|+|C\cap I^*|.
			\end{equation}
			Furthermore, since $I$ intersects three connected components of $C_n$, we have $s\geq r_n+g_n$. Thus,
			\[ |C\cap [0,s]|= |C\cap I_n|+|C \cap [r_n+g_n,s]= |C\cap I_n|+ |C\cap [0,s-(r_n+g_n)]|.\]
			We claim that $s-(r_n+g_n)\geq |I^*|$. Indeed, if we suppose $s-(r_n+g_n)< |I^*|$ we would have
			\[ s-(r_n+g_n)< (r_n+g_n+(b_3-a_3))-(r_n-(b_1-a_1)),\]
			from where we obtain $s<r_n+2g_n+\ell$, which contradicts (\ref{l2eq1}). Hence,
			\begin{equation}\label{l2eq3} |C\cap [0,s]|\geq |C\cap I_n| + |C \cap [0,|I^*|]|.
			\end{equation}
			Consequently, (\ref{l2eq2}) and (\ref{l2eq3}) gives
			\[ |C\cap I|-|C\cap [0,s]|\leq |C\cap I^*| - |C \cap [0,|I^*|]|.\]
			Since $I^*$ intersects only two connected components of $C_n$, this case reduces to case 3.\qedhere
		\end{itemize}
	\end{proof}
	
	 We will also need the following lemma.
	
	\begin{lemma}\label{lemma:4}
		Given a decreasing sequence $\{\lambda_n\}_{n\in \mathbb{N}}\subseteq (0,1)$, let $C$ be the Cantor set associated to $\{\lambda_n\}_{n\in\mathbb{N}}$. Pick $s \in (0,1)$, and take $n \in \mathbb{N}$ so that $r_n\leq s < r_{n-1}$. Then, $$\phi_C(s)\leq \phi_C(r_n).$$
	\end{lemma}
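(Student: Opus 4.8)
The plan is to recast the statement through Lemma~\ref{lemma:s} and then strip the Cantor construction down one level at a time. By Lemma~\ref{lemma:s}, $\phi_C(s)=\frac{|C\cap[0,s]|}{s}$ and $\phi_C(r_n)=\frac{|C\cap[0,r_n]|}{r_n}=\frac{|C\cap I_n|}{r_n}=:p$; if $|C|=0$ both sides of the claimed inequality vanish, so I may assume $|C|>0$. Since $r_{n-1}=2r_n+2g_n$, the interval $[0,r_{n-1})=I_{n-1}$ decomposes, at level $n$, into $I_n$, a gap of length $g_n$, a translate of $C\cap I_n$ carried on $[r_n+g_n,2r_n+g_n]$, and a final gap of length $g_n$. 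I would split according to where $s\in[r_n,r_{n-1})$ lies: if $s\in[r_n,r_n+g_n]$, then $|C\cap[0,s]|=pr_n$ and $s\ge r_n$ give $\phi_C(s)\le p$; if $s\in(2r_n+g_n,r_{n-1})$, then $|C\cap[0,s]|=2pr_n$ and $s>2r_n$ give $\phi_C(s)<p$; and if $s=r_n+g_n+t$ with $t\in(0,r_n]$, then $|C\cap[0,s]|=pr_n+|C\cap[0,t]|$ by the translation, and the claimed bound becomes equivalent to the \emph{Key Claim}: $|C\cap[0,t]|\le p\,(g_n+t)$ for every $t\in(0,r_n]$.

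For the Key Claim I would use self-similarity: $C\cap I_n=r_n C^{(n)}$, where $C^{(n)}$ is the Cantor set of the tail sequence $(\lambda_{n+j})_{j\ge1}$, which is again decreasing; in particular $|C^{(n)}|=|C\cap I_n|/r_n=p$. Using $g_n=\frac{\lambda_n}{1-\lambda_n}r_n$ (Lemma~\ref{lemma:1}) and rescaling by $r_n$, the Key Claim follows once one proves, for the Cantor set $C'$ of \emph{any} sequence $(\mu_j)_{j\ge1}\subseteq(0,1)$ with $|C'|>0$,
\[ D(C')\ :=\ \sup_{u\in(0,1]}\bigl(|C'\cap[0,u]|-|C'|\,u\bigr)\ \le\ |C'|\sum_{j\ge1}\frac{\mu_j}{2^{j}} ; \]
indeed, applied to $C^{(n)}$ (so $\mu_j=\lambda_{n+j}$) and using $\lambda_{n+j}\le\lambda_n$ by monotonicity, this gives $D(C^{(n)})\le p\lambda_n\le p\,\frac{\lambda_n}{1-\lambda_n}$, i.e.\ $|C\cap[0,t]|-pt\le pg_n$ on $(0,r_n]$, which is the Key Claim.

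The bound on $D(C')$ I would get from the one-step recursion
\[ D(C')\ \le\ \frac{1-\mu_1}{2}\,D(C'')+\frac{\mu_1}{2}\,|C'|, \]
$C''$ being the Cantor set of $(\mu_{j+1})_{j\ge1}$, proved by the same four-region analysis of $[0,1)$ at level $1$: on each of the two gaps the function $u\mapsto|C'\cap[0,u]|-|C'|\,u$ is affine and decreasing, hence at most its value $\frac{\mu_1}{2}|C'|$ at that gap's left endpoint; on the first piece $I'_1=r'_1C''$, substituting $u=r'_1w$ and using $|C'|=(1-\mu_1)|C''|$ rewrites it as $r'_1\bigl(|C''\cap[0,w]|-|C''|w\bigr)+\frac{\mu_1}{2}|C'|w$, which is $\le\frac{1-\mu_1}{2}D(C'')+\frac{\mu_1}{2}|C'|$ since $w\le1$; and on the third piece it equals its value at $u-(r'_1+g'_1)\in[0,r'_1]$, because $|C'|(r'_1+g'_1)=\tfrac12|C'|=|C'\cap I'_1|$, so that region adds nothing new. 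Iterating $J$ times and collapsing the accumulated terms via $\prod_{i\le j}(1-\mu_i)\,|C^{(j)}|=|C'|$ gives $D(C')\le\frac{\prod_{i\le J}(1-\mu_i)}{2^{J}}D(C^{(J)})+|C'|\sum_{j=1}^{J}2^{-j}\mu_j$, and $D(C^{(J)})\le|C^{(J)}|\le1$ makes the first term at most $2^{-J}\to0$. The real work, I expect, is the Key Claim, and inside it the level-$1$ recursion for $D$ — in particular verifying that the third region genuinely contributes nothing and that the tail of the iteration vanishes; the three cases of the reduction and the final arithmetic are routine.
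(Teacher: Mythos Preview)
Your argument is correct and follows a genuinely different route from the paper's. The paper first reduces to $s\notin C$, then to $s=b$ the right endpoint of some level-$k$ component of $C_k$, expands $b=r_k+\sum_{i=0}^{k-n}\theta_i(r_{n+i}+g_{n+i})$ with $\theta_i\in\{0,1\}$, and after algebraic manipulation shows that the worst case is $\theta_i\equiv1$; there monotonicity of $\{\lambda_n\}$ enters through the gap comparison $g_{i+1}/g_i<\tfrac12$, which gives $\sum_{i\ge1}g_{n+i}\le g_n$ and hence $b\ge 2r_n$, finishing the proof. You instead split $[r_n,r_{n-1})$ into its four level-$n$ pieces, dispose of the two gap regions and the trivial translate directly, and isolate the only nontrivial region as your Key Claim; then you exploit the exact self-similarity $C\cap I_n=r_nC^{(n)}$ to recast it as a bound on the functional $D(C')=\sup_u\bigl(|C'\cap[0,u]|-|C'|u\bigr)$, for which a one-step recursion and iteration yield $D(C')\le|C'|\sum_{j\ge1}\mu_j2^{-j}$ for \emph{any} parameter sequence, with monotonicity used only at the very end to bound $\sum_j\lambda_{n+j}2^{-j}\le\lambda_n$. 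Your approach is more structural and produces a quantitative byproduct valid without the decreasing hypothesis, while the paper's endpoint expansion is more hands-on and avoids introducing the auxiliary functional $D$; both hinge on essentially the same geometric fact, that successive gaps shrink fast enough under the monotonicity assumption.
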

	
	\begin{proof}
		Pick $s \in (0,1)$, and take $n \in \mathbb{N}$ so that $r_n\leq s < r_{n-1}$. In view of Lemma \ref{lemma:s}, we need to prove
		\[ \frac{|C\cap[0,s]|}{s} \leq \frac{|C\cap[0,r_n]|}{r_n}.\]
		Since $[0,1]\setminus C$ is dense in $[0,1]$ and $\phi_C$ is a continuous function, we may assume that $s\notin C$. Take the smallest integer $k \in \mathbb{N}$ so that $s \notin C_k$. Clearly, we must have that $k\geq n$. Moreover, since $I_n$ is formed by $2^{k-n}$ copies of $I_k$ and gaps, we have
		\begin{equation}\label{eq1} |C\cap I_n|=2^{k-n}|C\cap I_k|.
		\end{equation}
		Furthermore, since $r_{n+i}=2r_{n+i+1}+2g_{n+i+1}$ for every $i \in \mathbb{N}$, by induction we obtain that
		\begin{equation}\label{eq1.1} r_{n+i}=2^{k-(n+i)}r_k+\sum_{j=i+1}^{k-n} 2^{j-i}g_{n+j}=2^{k-(n+i)}r_k+ G_{n+i}  \quad \forall \, i\in \{0,\ldots,k-n\},
		\end{equation}
		where $G_{n+i} = \sum_{j=i+1}^{k-n} 2^{j-i}g_{n+j}$ represents the measure of all gaps in $I_{n+i}$ of order less or equal than $k$, understanding $G_{k}=0$.
		On the other hand, we know that $s$ lies on a gap of $C_k$. Going from left to right, let $J=[a,b]$ be the interval of $C_k$ that we find right before such a gap. Then, $|C\cap [0,s]|=|C\cap[0,b]|$ and clearly $s\geq b$, so $\phi_C(s)\leq\phi_C(b)$. Therefore, it will be enough to show that 
		\begin{equation}\label{eq1.5}
			\frac{|C\cap [0,b]|}{b} \leq \frac{|C\cap I_n|}{r_n}.
		\end{equation}
		Since $b$ is the end point of some interval of $C_k$, we can write $b$ as
		\begin{equation}\label{eq2} b = r_k+ \sum_{i=0}^{k-n} \theta_i(r_{n+i} + g_{n+i})
		\end{equation}
		for some $\theta_i \in \{0,1\}$ for $i=0,\ldots,k-n$. We may suppose $b> r_n$, which implies $\theta_0=1$. Using (\ref{eq1.1}) we can decompose $b$ into a sum of copies of $I_k$ and gaps.
		\begin{align*}
			b &= r_k+ \sum_{i=0}^{k-n} \theta_i(r_{n+i} + g_{n+i}) = r_k+\sum_{i=0}^{k-n} \theta_i\left (2^{k-(n+i)}r_k+ G_{n+i}+ g_{n+i}\right )
			\\&=r_k\left (1+\sum_{i=0}^{k-n}\theta_i 2^{k-(n+i)}\right) + \sum_{i=0}^{k-n}\theta_i(G_{n+i}+ g_{n+i}).
		\end{align*}
		Consequently,
		\begin{equation}\label{eq3} |C\cap [0,b]|= |C\cap I_k| \left (1+\sum_{i=0}^{k-n}\theta_i 2^{k-(n+i)}\right).
		\end{equation}
		Now, we can substitute in (\ref{eq1.5}) using equalities (\ref{eq1}), (\ref{eq1.1}), (\ref{eq3}), and the decomposition of $b$. After simplifying the obtained expression, we can rewrite inequality (\ref{eq1.5}) as
		\[ G_n\left (1+\sum_{i=0}^{k-n}\theta_i 2^{k-(n+i)}\right)\leq 2^{k-n} \sum_{i=0}^{k-n}\theta_i(G_{n+i}+ g_{n+i}).\]
		Equivalently, it will be enough to show that
		\begin{equation}\label{eq4} \sum_{i=0}^{k-n} 2^{k-(n+i)} \theta_i( 2^i (G_{n+i}+g_{n+i}) - G_n)\geq G_n.
		\end{equation}
		Notice that $G_i=2G_{i+1} + 2g_{i+1}$ for every $i\in\{n,\ldots,k\}$. Therefore,
		\[ G_n=2^iG_{n+i} +\sum_{j=1}^i 2^j g_{n+j}, \quad \forall \, i \in \{1,\ldots,k-n\}.\]
		From here, for any $i \in \{1,\ldots,k-n\}$ we deduce that
		\begin{align*}
			2^i(G_{n+i} + g_{n+i}) -G_n=2^ig_{n+i} - \sum_{j=1}^i 2^jg_{n+j}=-\sum_{j=1}^{i-1} 2^jg_{n+j}.
		\end{align*}
		Therefore, inequality (\ref{eq4}) can be rewritten as
		\[ 2^{k-n} g_n - \sum_{i=1}^{k-n} \left( 2^{k-(n+i)}\theta_i \left (\sum_{j=1}^{i-j} 2^jg_{n+j}\right )\right )\geq G_n.\]
		In view of this, it will be enough to study the case when $\theta_i=1$ for every $i=0,\ldots,k-n$. In such case, we deduce from (\ref{eq1}) and (\ref{eq3}) that $|C\cap [0,b]|=2|C\cap I_{n}|$. Finally, we claim that $b\geq 2r_n$, from where (\ref{eq1.5}) follows. Indeed, since $\lambda_{i+1}\leq \lambda_i$  for every $i \in \mathbb{N}$, we get
		\[ \frac{g_{i+1}}{g_{i}} =\frac{\lambda_{i+1}}{\lambda_{i}}\frac{r_{i}}{r_{i-1}}\leq \frac{r_i}{r_{i-1}}=\frac{\frac{1}{2^{i}}\prod_{j=1}^{i}(1-\lambda_j)}{\frac{1}{2^{i-1}}\prod_{j=1}^{i-1}(1-\lambda_j)}=\frac{1}{2}(1-\lambda_i)<\frac{1}{2}.\]
		As a consequence of this, we deduce that
		\[ \sum_{i=1}^{k-n} g_{n+i} \leq\sum_{i=1}^\infty g_{n+i} \leq \sum_{i=1}^\infty \frac{1}{2^i} g_{n}=g_n. \]
		Since we are assuming that $b$ is the end point of the last interval composing $C_k$, we conclude that
		\[ b=r_{n-1} - \sum_{i=0}^{k-n} g_{n+i}= 2r_n+2g_n -g_n- \sum_{i=1}^{k-n} g_{n+i} \geq 2r_n.\qedhere\]
	\end{proof}
	
	We are now able to present the proof of the main result.
	
	\begin{proof}[Proof of Theorem \ref{theo:Cantor}]
		As we commented in the introduction, (ii)$\Rightarrow$ (i) easily follows from Lebesgue's density theorem and the fact that $|C|>0$. Thus, it remains to prove (i) $\Rightarrow$ (ii). First, let us assume that $f$ is decreasing. We will prove that there exists a decreasing sequence of numbers $\{\lambda_n\}_{n \in \mathbb{N}}\subseteq (0,1)$ for which its associated Cantor set $C$ satisfies $|C|>0$ and $\phi_C(r_n)<f(2^{-n+1})$ for every $n \in \mathbb{N}$, or equivalently,
		\begin{equation}\label{conditionlambda} \prod_{j=n+1}^\infty (1-\lambda_j) < f(2^{-n+1}) \quad \forall \, n \in \mathbb{N}.
		\end{equation}
		Taking logarithm and multiplying by $-1$ gives the following equivalent condition.
		\[ \sum_{j=n+1}^\infty \log\left (\frac{1}{1-\lambda_j}\right ) > -\log( f(2^{-n+1}))\quad \forall \, n \in \mathbb{N}.\]
		Notice that by hypothesis $\inf_{[0,1]} f(x)=f(1)>0$, so the right term in above expression makes sense. Write $L_n = -\log( f(2^{-n+1}))$ for every $n \in \mathbb{N}$. Then, since $\lim_{x\to 0^+} f(x)=1$,  $\{L_n\}_{n \in \mathbb{N}}$ is a decreasing sequence of positive numbers converging to zero. It is not difficult to see that there exists another decreasing sequence of positive numbers $\{ \ell_n \}$ satisfying that $\sum_{j=0}^\infty \ell_j <\infty$ and
		\[ \sum_{j=n+1}^\infty \ell_j > L_n \quad \forall \, n \in \mathbb{N}.\]
		We just need to take $\lambda_n= 1-e^{-\ell_n}$ for every $n\in \mathbb{N}$ to obtain a decreasing sequence in $(0,1)$ for which $|C|>0$ and condition (\ref{conditionlambda}) is satisfied. Since $f$ is decreasing and $r_{n-1} \leq 2^{-n+1}$, we also obtain that $\phi_C(r_n)<f(r_{n-1})$ for every $n \in \mathbb{N}$. Now, pick $s \in (0,1]$ and take $n \in \mathbb{N}$ so that $r_n< s \leq r_{n-1}$. Then,
		\[ \phi_{C}(s)\leq \phi_{C}(r_n)<f(r_{n-1})\leq f(s),\]
		where the first inequality comes from Lemma \ref{lemma:4} and the last inequality comes from the fact that $f$ is decreasing. The result follows from the definition of the maximal density function (see (\ref{densityfunction})).
		
		We now proceed to prove the general case. Let $g$ be a function satisfying (i). Define $h\colon[0,1]\longrightarrow \mathbb{R}$ by
		\[ h(s)=\inf \{g(x) \colon x\in [0,s]\} \quad \forall \, s \in [0,1].\]
		Then, $h$ is decreasing and satisfies (i). Indeed, it is clear that $\lim_{s\to 0^+} h(s)=\lim_{s \to 0^+} g(s)=1$. Also,
		\[ \inf\{h(s)\colon s \in [0,1]\}=h(1)=\inf\{g(s)\colon s \in [0,1]\}>0.\]	
		Hence, applying the previous case to the function $h$ gives a Cantor set $C\subseteq [0,1]$ of positive measure so that $\phi_{C}(s)<h(s)$ for every $s \in(0,1]$. Finally, notice that $h(s)\leq g(s)$ for every $s \in [0,1]$.
	\end{proof}

	\section{Lipschitz maps on $C^1$ curves}\label{section3}
	Consider a \textit{pointed} metric space $M$, that is, we distinguish in $M$ a point that we denote by $0$. The map $\|\cdot\|_L$ that assigns to every Lipschitz function $f$ its Lipschitz constant $\operatorname{L}(f)$ is a complete norm in the space of Lipschitz functions from $M$ to $\mathbb{R}$ vanishing at $0$. This Banach space is usually called the \textit{Lipschitz space} over $M$ and is denoted by $\Lip(M,\mathbb{R})$. The study of Lipschitz spaces has been strongly developed during the last decades (good references for background are \cite{godefroy2015survey} and \cite{weaver2018lipschitz}). Moreover, this line of research recently has gained popularity due to the importance that Lipschitz maps have in the theory of nonlinear geometry of Banach spaces. Furthermore, the study of Lipschitz-free spaces, which are isometric preduals of the Lipschitz spaces, is a very active line of research nowadays (see \cite{albiac2021lipschitz},\cite{aliaga2021purely}, \cite{cuth2016on}, \cite{godard2010tree}, and \cite{godefroy2003lipschitz} for instance).
	
	Write $\SA(M,\mathbb{R})$ for the set of Lipschitz functions from $M$ to $\mathbb{R}$ attaining its Lipschitz constant. In the survey paper \cite{godefroy2015survey}, it is asked the following natural question: for which metric spaces is it possible to approximate every Lipschitz function by Lipschitz functions attaining their Lipschitz constant, or equivalently, $\SA(M,\mathbb{R})$ is norm-dense in $\Lip(M,\mathbb{R})$? For background see \cite{cascales2019on}, where this problem is deeply studied. 
	
	The first negative result appeared in \cite{kadets2016normattaining}, where the authors proved that if we consider $[0,1]$ with the usual metric, then $\SA([0,1],\mathbb{R})$ is not dense in $\Lip([0,1],\mathbb{R})$. In some sense, density fails for this metric space because all points are metrically aligned, that is, the triangle inequality is always an equality. After this result, some generalizations appeared to give new examples of metric spaces $M$ for which $\SA(M,\mathbb{R})$ is not dense in $\Lip(M,\mathbb{R})$. However, all these new examples are similar in the sense that we find many points metrically aligned (or almost metrically aligned) in them. The only negative example which differs from them is Theorem 2.1 in \cite{chiclana2021examples}, which shows that $\SA(T,\mathbb{R})$ is not dense in $\Lip(T,\mathbb{R})$, where $T$ denotes the unit circumference in $\mathbb{R}^2$, endowed with the Euclidean metric. This section is devoted to prove Theorem \ref{theo:C1}, which generalizes the previous result for $C^1$ curves. In order to prove Theorem \ref{theo:C1}, we will apply Theorem \ref{theo:Cantor} to a specific function. The following preliminary result shows that such a function satisfies one of the necessary conditions of Theorem \ref{theo:Cantor}.
	
	\begin{lemma}\label{lemma:inf}
		Let $E$ be a normed space, $\rho>0$, and $\alpha \colon [0,\rho] \longrightarrow E$ a $C^1$ curve parametrized by arc length. Consider the function $g\colon (0,\rho] \longrightarrow \mathbb{R}$ given by
		\[ g(x)=\inf \left \{\frac{\|\alpha(t)-\alpha(s)\|}{|t-s|} \colon t,s \in [0,\rho], |t-s|=x\right \} \quad \forall \, x \in (0,\rho].\]
		Then, $\lim_{x\to 0^+} g(x)=1$.
	\end{lemma}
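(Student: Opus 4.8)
The plan is to show that the difference quotient $\|\alpha(t)-\alpha(s)\|/|t-s|$ is uniformly close to $1$ whenever $|t-s|$ is small, using the $C^1$ hypothesis and the fact that $\alpha$ is parametrized by arc length (so $\|\alpha'(u)\|=1$ for all $u\in[0,\rho]$). Fix $x\in(0,\rho]$ and take $s<t$ in $[0,\rho]$ with $t-s=x$. The key identity is the fundamental theorem of calculus applied to the vector-valued map: $\alpha(t)-\alpha(s)=\int_s^t \alpha'(u)\,du$. From this one gets the upper bound $\|\alpha(t)-\alpha(s)\|\le \int_s^t\|\alpha'(u)\|\,du = t-s$, so $g(x)\le 1$ always; the real content is the lower bound.

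For the lower bound I would compare $\alpha'(u)$ with the fixed vector $\alpha'(s)$. Write
\[
\alpha(t)-\alpha(s)=\int_s^t \alpha'(u)\,du = (t-s)\,\alpha'(s) + \int_s^t\big(\alpha'(u)-\alpha'(s)\big)\,du,
\]
so by the triangle inequality
\[
\|\alpha(t)-\alpha(s)\| \ge (t-s)\|\alpha'(s)\| - \int_s^t\|\alpha'(u)-\alpha'(s)\|\,du = (t-s) - \int_s^t\|\alpha'(u)-\alpha'(s)\|\,du.
\]
Since $[0,\rho]$ is compact and $\alpha'$ is continuous, $\alpha'$ is uniformly continuous on $[0,\rho]$: given $\varepsilon>0$ there is $\delta>0$ such that $\|\alpha'(u)-\alpha'(v)\|<\varepsilon$ whenever $|u-v|<\delta$. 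Hence for $x=t-s<\delta$ the integral above is at most $\varepsilon(t-s)$, which gives $\|\alpha(t)-\alpha(s)\|\ge (1-\varepsilon)(t-s)$, i.e. the difference quotient is $\ge 1-\varepsilon$. Taking the infimum over all such pairs yields $1-\varepsilon\le g(x)\le 1$ for every $x\in(0,\delta)$, and letting $\varepsilon\to 0^+$ gives $\lim_{x\to 0^+} g(x)=1$.

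There is no serious obstacle here; the only point requiring a little care is that $\alpha$ is vector-valued, so one should justify the fundamental theorem of calculus and the estimate $\big\|\int_s^t \beta(u)\,du\big\|\le \int_s^t\|\beta(u)\|\,du$ for continuous $\beta\colon[s,t]\to E$ — both are standard for Riemann integrals of continuous functions with values in a normed space (approximate by Riemann sums and pass to the limit, using the triangle inequality). Everything else is uniform continuity of $\alpha'$ on the compact interval $[0,\rho]$.
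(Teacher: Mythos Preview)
Your argument is correct. Both the upper bound $g(x)\le 1$ and the lower bound via
\[
\|\alpha(t)-\alpha(s)\|\ge (t-s)-\int_s^t\|\alpha'(u)-\alpha'(s)\|\,du\ge (1-\varepsilon)(t-s)
\]
for $t-s<\delta$ go through exactly as you wrote, and the justification you give for the vector-valued fundamental theorem of calculus and the norm--integral inequality is adequate. (If one wants to sidestep integration in a possibly incomplete normed space altogether, the mean value inequality $\|\beta(t)-\beta(s)\|\le (t-s)\sup_{[s,t]}\|\beta'\|$ applied to $\beta(u)=\alpha(u)-u\,\alpha'(s)$ gives the same estimate without any integral.)

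The paper's proof is organized differently: it argues by contradiction, extracting sequences $t_n\neq s_n$ with $|t_n-s_n|\to 0$ and $\|\alpha(t_n)-\alpha(s_n)\|/|t_n-s_n|<1-\varepsilon$, using compactness of $[0,\rho]$ to pass to subsequences converging to a common point $t_0$, and then invoking the continuity of the map $\Phi(t,s)$ that equals $(\alpha(t)-\alpha(s))/(t-s)$ off the diagonal and $\alpha'(t)$ on it. Since $\|\Phi(t_0,t_0)\|=\|\alpha'(t_0)\|=1$, this contradicts the assumed inequality. Your route is more direct and quantitative: it avoids the contradiction and the subsequence extraction, and it actually yields the explicit bound $g(x)\ge 1-\omega_{\alpha'}(x)$, where $\omega_{\alpha'}$ is the modulus of continuity of $\alpha'$ on $[0,\rho]$. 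The paper's approach, on the other hand, packages the same uniformity into the single statement that $\Phi$ is continuous, which is a convenient way to phrase it if that map is to be reused elsewhere.
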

	
	\begin{proof} 
		First, it is clear that $|g(x)|\leq 1$ for every $x \in (0,\rho]$. Now, assume the statement is not true. Then, we find $\varepsilon>0$ and sequences $\{t_n\}$, $\{s_n\} \subseteq [0,\rho]$ with $t_n\neq s_n$ so that $\lim_{n\to\infty}|t_n-s_n|=0$ and
		\[ \frac{\|\alpha(t_n)-\alpha(s_n)\|}{|t_n-s_n|} <1-\varepsilon \quad \forall \, n \in \mathbb{N}.\]
		However, this contradicts the fact that $\|\alpha'(t)\|=1$ for every $t \in [0,\rho]$. Indeed, since $[0,\rho]$ is compact and $|t_n-s_n|$ goes to $0$, we find partial sequences $\{t_{\sigma(n)}\}$, $\{s_{\sigma(n)}\}$ converging to a common point $t_0\in [0,\rho]$. On the other hand, it is a standard fact that the map $\Phi \colon [0,\rho] \longrightarrow E$ given by
		\[ \Phi (t,s) = \left \{ \begin{array}{ll}
			\frac{\alpha(t)-\alpha(s)}{t-s} & \mbox{ if $t,s \in [0,\rho]$ with $ t\neq s$};\\
			\alpha'(t) & \mbox{ if $t,s \in [0,\rho]$ with $ t=s$},
		\end{array} \right.\]
		is continuous. This easily follows from the fact that that $\alpha$ is $C^1$ and Taylor's formula. Consequently, $\|\Phi(\cdot,\cdot)\|$ is also continuous, but $\|\Phi(t_0,t_0)\|=\|\alpha'(t_0)\|=1$, so we must have
		\[ \lim_{n \to \infty}
		\frac{\|\alpha(t_{\sigma(n)}) - \alpha(s_{\sigma(n)})\|}{|t_n-s_n|} = 1,\]
		which contradicts the assumption.
	\end{proof}
	
	Let $E$ be a normed space and $\alpha\colon [0,r] \longrightarrow E$ a $C^1$ curve parametrized by arc length, for some $r>0$. Then, there is $0<\rho<1$ small enough so that
	\begin{equation}\label{equivdist} \frac{1}{2}|t-s|\leq \|\alpha(t)-\alpha(s)\|\leq|t-s| \quad \forall \, t,s \in [0,\rho].
	\end{equation}
	Consequently, the function $g$ considered in Lemma \ref{lemma:inf} satisfies $\inf_{[0,\rho]} g(x)>0$. We can extend $g$ to $[0,1]$ with $g(x)=g(\rho)$ for every $\rho\leq x \leq 1$ to get a function to which Theorem \ref{theo:Cantor} applies.
	
	Given a Lipschitz function $f \colon M \longrightarrow \mathbb{R}$ and two distinct points $p$, $q \in M$, we will use $f(m_{p,q})$ to denote the quotient $\frac{f(p)-f(q)}{d(p,q)}$. We say that a point $p_0 \in M$ is \textit{norming} for the function $f$ if there are two sequences $\{p_n\}_{n \in \mathbb{N}}$, $\{q_n\}_{n\in \mathbb{N}}\subseteq M$, with $p_n\neq q_n$ for every $n \in \mathbb{N}$, satisfying
	\[ \quad \lim_{n \to \infty} p_n= \lim_{n \to \infty} q_n = p_0 \quad \mbox{ and } \quad \lim_{n \to \infty} \|f(m_{p_n,q_n})\| = \|f\|_L.\]
	Recall that every nontrivial $C^1$ curve can be locally reparametrized with respect to arc length. The following lemma allows us to restrict the study of the curve to a small arc.

	\begin{lemma}\label{lemma:open}
		Let $M$ be a metric space and let $N$ be a subset of $M$ with nonempty interior. Assume that there exists $f_0 \in \Lip(N,\mathbb{R})\setminus \overline{\SA(N,\mathbb{R})}$ with a norming point $p_0$ that belongs to the interior of $N$. Then, $\SA(M,\mathbb{R})$ is not dense in $\Lip(M,\mathbb{R})$.
	\end{lemma}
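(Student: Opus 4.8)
My plan is to produce an explicit $F\in\Lip(M,\mathbb{R})$ with $F\notin\overline{\SA(M,\mathbb{R})}$, which is exactly the desired conclusion. The idea is to transplant a localised copy of $f_0$ near $p_0$ and to make $F$ strictly contractive far from $p_0$, so that any norm-attaining Lipschitz function close to $F$ must attain its Lipschitz constant at a pair of points lying inside $N$; restricting such a function to $N$ then produces an element of $\SA(N,\mathbb{R})$ close to $f_0$, which is impossible.

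After normalising $\operatorname{L}(f_0)=1$, $f_0(p_0)=0$ and setting $2\delta:=\operatorname{dist}_{\Lip(N,\mathbb{R})}(f_0,\SA(N,\mathbb{R}))>0$, fix $\rho>0$ with $\overline{B}(p_0,\rho)\subseteq N$; then $|f_0|\le d(\cdot,p_0)\le\rho$ on $\overline{B}(p_0,\rho)$, $\operatorname{L}(f_0|_{\overline{B}(p_0,\rho)})=1$, and $p_0$ is still a norming point of $f_0|_{\overline{B}(p_0,\rho)}$ since the norming sequences of $f_0$ eventually lie in $\overline{B}(p_0,\rho)$. First I would select a closed set $K$ with $p_0\in\operatorname{int}(K)\subseteq K\subseteq\overline{B}(p_0,\rho)$ such that: (i) $f_0|_K\notin\overline{\SA(K,\mathbb{R})}$; and (ii) there are $\eta\in(0,1)$ and a neighbourhood $V$ of $\partial_M K$ with $|f_0(x)-f_0(y)|\le(1-\eta)\,d(x,y)$ for all distinct $x,y\in K\cap V$. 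Informally: $K$ swallows the ``almost norming'' region of $f_0$ around $p_0$, but its boundary avoids that region, and restricting $f_0$ to $K$ does not push it into $\overline{\SA}$.

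Next I would build the flat extension. Using (ii) and the bound $|f_0|\le\rho$ on $K$, define $F\colon M\to\mathbb{R}$ with $F|_K=f_0|_K$, $\operatorname{L}(F)=1$, and some $\eta'\in(0,1)$ with $|F(x)-F(y)|\le(1-\eta')\,d(x,y)$ whenever $\{x,y\}\not\subseteq\operatorname{int}(K)$. A convenient formula is $F(x)=\inf_{z\in K}\bigl(f_0(z)+\omega(d(x,z))\bigr)$ with $\omega$ a concave modulus equal to the identity up to $\operatorname{diam}(K)$ (so that $F|_K=f_0|_K$ and $\operatorname{L}(F)=1$) and of slope strictly below $1$ thereafter; the error terms in the estimate are controlled because $f_0$ is bounded by $\rho$ on $K$ and by (ii) near $\partial_M K$. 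Finally, if $g_n\in\SA(M,\mathbb{R})$ and $g_n\to F$ in $\Lip(M,\mathbb{R})$, then $\operatorname{L}(g_n)\to1$, and for distinct $x,y$ with $\{x,y\}\not\subseteq\operatorname{int}(K)$ we get $|g_n(x)-g_n(y)|\le\bigl(1-\eta'+\operatorname{L}(g_n-F)\bigr)d(x,y)<\operatorname{L}(g_n)\,d(x,y)$ for large $n$; hence $g_n$ attains its Lipschitz constant at a pair inside $\operatorname{int}(K)\subseteq K$, so $g_n|_K\in\SA(K,\mathbb{R})$ and $g_n|_K\to f_0|_K$, forcing $f_0|_K\in\overline{\SA(K,\mathbb{R})}$ and contradicting (i). Thus $F\notin\overline{\SA(M,\mathbb{R})}$.

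The main obstacle is the localisation in the second paragraph: one must choose $K$ so that $\partial_M K$ avoids the set of points near which $f_0$ is ``almost norming'' while simultaneously keeping $\operatorname{dist}_{\Lip(K,\mathbb{R})}(f_0|_K,\SA(K,\mathbb{R}))>0$. The flat extension is then a routine, if careful, infimal-convolution estimate, and the concluding argument is soft. In the intended application $N$ is an arc of a $C^1$ curve, hence $M$ and $N$ are essentially one-dimensional, and one may typically take $K=\overline{B}(p_0,\rho)$ with $\rho$ chosen so that $f_0$ is flat near the two endpoints of the arc $\partial_M K$, which makes both (i) and (ii) transparent.
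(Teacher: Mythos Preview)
Your overall strategy---build a function on $M$ that agrees with (a localisation of) $f_0$ near $p_0$, is strictly sub-Lipschitz whenever at least one point lies far from $p_0$, and then restrict approximants to reach a contradiction---is the same as the paper's. The gap is exactly where you flag it, and for the lemma \emph{as stated} it is genuine.

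Neither (i) nor (ii) follows from the hypotheses. For (ii): in a general metric space there is no reason $f_0$ should be uniformly sub-Lipschitz near $\partial_M K$ for \emph{any} choice of $K$ containing $p_0$ in its interior; the ``almost-norming'' set of $f_0$ may be complicated and need not admit a neighbourhood of $p_0$ whose boundary avoids it. For (i): the hypothesis $f_0\notin\overline{\SA(N,\mathbb{R})}$ does not transfer to $f_0|_K\notin\overline{\SA(K,\mathbb{R})}$. If $h\in\SA(K,\mathbb{R})$ is close to $f_0|_K$, its McShane extension to $N$ lies in $\SA(N,\mathbb{R})$ but need not be close to $f_0$ on $N\setminus K$, so no contradiction arises. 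Your concluding paragraph therefore rests on an unproved assumption.

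The paper sidesteps both obstacles at once. Rather than searching for a region where $f_0$ is already flat, it \emph{forces} flatness: it takes a McShane extension $f$ of $f_0$ to $M$ and multiplies by a scalar bump $\varphi$ equal to $1$ on a small ball $B(p_0,r_0/4)$ and equal to a constant $1-c<1$ outside $B(p_0,r_0/3)$, with $\|\varphi\|_L$ small. A direct estimate gives $\|g-f\|_L<\delta$ for $g=\varphi f$, so $g|_N$ is still outside $\overline{\SA(N,\mathbb{R})}$; this replaces your (i), and crucially the comparison is made on $N$ itself, where the hypothesis lives, not on a smaller set. Since $g=(1-c)f$ away from $p_0$, any pair with a point outside $B(p_0,r_0/3)$ has incremental quotient bounded strictly below $\|g\|_L$ (here the norming point is used to get $\|g\|_L\ge\|f\|_L$), which replaces your (ii) with no structural assumption on $f_0$. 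The endgame is then exactly your case analysis: an approximating sequence in $\SA(M,\mathbb{R})$ must eventually attain inside $B(p_0,r_0)\subseteq N$, and restricting to $N$ contradicts $g|_N\notin\overline{\SA(N,\mathbb{R})}$.

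For the specific curve application your sketch can indeed be completed along the lines you indicate, but the lemma is stated for arbitrary metric spaces, and there the multiplicative-bump trick is the missing idea that removes the need for (i) and (ii) altogether.
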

	
	\begin{proof}
		We may and do assume that the distinguished point of $N$ and $M$ is $p_0$. Let $f_0 \in \Lip(N,\mathbb{R})$ be the function given by the hypothesis.
		Pick $\delta>0$ such that for every $g_0 \in \Lip(N,\mathbb{R})$ satisfying $\|f_0-g_0\|_L<\delta$, we have $g_0 \notin \overline{\SA(N,\mathbb{R})}$. Consider $r_0 \in \mathbb{R}^+$ such that $B(p_0,r_0)\subseteq N$ and pick $0<\varepsilon<\frac{12\delta}{5r_0}$. Let us define a function $\varphi \colon M \longrightarrow \mathbb{R}$ by
		\[ \varphi(p) = \left \{ \begin{array}{ll}
			1 & \mbox{ if $p \in B(p_0,\frac{r_0}{4})$};\\
			1-\varepsilon(d(p,p_0)-\frac{r_0}{4}) & \mbox{ if $p \in B(p_0,\frac{r_0}{3})\setminus B(p_0,\frac{r_0}{4})$};\\
			1-\frac{\varepsilon r_0}{12} & \mbox{ if $p \notin B(p_0,\frac{r_0}{3})$}.
		\end{array} \right.\]
		It is immediate to verify that $\varphi$ is Lipschitz and $\|\varphi\|_L\leq \varepsilon$. Now, let $f\colon M \longrightarrow \mathbb{R}$ be an extension of $f_0$ via McShane preserving the norm $\|\cdot\|_L$. Define $g \colon M \longrightarrow \mathbb{R}$ by $g(p)=f(p) \varphi(p)$ for every $p \in M$. It is clear that $g$ is Lipschitz. In fact, if $p \in B(p_0,\frac{r_0}{3})$ and $q \in M$, notice that
		
		\begin{align*} |(g-f)(m_{p,q})|&=\frac{|(g(p)-f(p))-(g(q)-f(q))|}{d(p,q)}\\
			&=\frac{|(\varphi(p)-\varphi(q))f(p) + (\varphi(q)f(p)-f(p)) - (\varphi(q)f(q)-f(q))|}{d(p,q)}\\
			&\leq |f(p)||\varphi(m_{p,q})| + \frac{| (f(p)-f(q))-(\varphi(q)f(p)-\varphi(q)f(q))|}{d(p,q)}\\
			&= |f(p)||\varphi(m_{p,q})| +  \frac{|(f(p)-f(q))(1-\varphi(q))|}{d(p,q)}
			\leq \frac{r_0}{3}\varepsilon+\frac{\varepsilon r_0}{12}=\frac{5r_0}{12}\varepsilon<\delta.
		\end{align*}
		By symmetry, the same happens if we assume $p \in M$, $q\in B(p_0,\frac{r_0}{3})$. Finally, if neither $p$ nor $q$ belong to $B(p_0,\frac{r_0}{3})$, then we simply have that
		\[ |(g-f)(m_{p,q})|=\left |\left (1-\frac{\varepsilon r_0}{12}\right )f(m_{p,q}) - f(m_{p,q})\right |\leq \frac{r_0}{12}\varepsilon<\delta.\]
		
		Consequently, $\|g-f\|_L<\delta$, which implies that $g$ is Lipschitz. Moreover, denoting by $g_0$ the restriction of $g$ to $N$, we obtain that $g_0 \notin \overline{\SA(N,\mathbb{R})}$. We claim that $g \notin \overline{\SA(M,\mathbb{R})}$. To prove it, suppose that there exist sequences $\{h_n\}\subseteq \Lip(M,\mathbb{R})$, $\{p_n\}$, $\{q_n\} \subseteq M$ with $p_n\neq q_n$ and $h_n(m_{p_n,q_n})=\|h_n\|_L=\|g\|_L$ for every $n \in \mathbb{N}$ such that $\{\|h_n-g\|_L\}$ converges to $0$. We will distinguish three cases:
		\begin{itemize}
			\item Case 1. $p_n$, $q_n \in B(p_0,r_0)$ eventually, that is, the set $\{n \in \mathbb{N} \colon p_n, q_n \in B(p_0,r_0)\}$ is infinite.
			Recall that $B(p_0,r_0)\subseteq N$, so this assumption implies that restrictions of $h_n$ to $N$ strongly attain their norms eventually, which is impossible since $g_0 \notin \overline{\SA(N,\mathbb{R})}$.
			\item Case 2. $p_n$, $q_n \notin B(p_0,\frac{r_0}{3})$ eventually. Fix $n \in \mathbb{N}$ and note that
			\begin{align*} |(h_n-g)(m_{p_n,q_n})| &= \|g\|_L - g(m_{p_n,q_n})= \|g\|_L - \left (1-\frac{\varepsilon r_0}{12}\right )f(m_{p_n,q_n}) \\
				&\geq \|g\|_L - \left (1-\frac{\varepsilon r_0}{12}\right )\|f\|_L
				\geq \|f\|_L-\left (1-\frac{\varepsilon r_0}{12}\right )\|f\|_L= \frac{\varepsilon r_0}{12}.
			\end{align*}
			Therefore, $\|h_n-g\|_L\geq \frac{\varepsilon r_0}{12}$ for every $n \in \mathbb{N}$, which contradicts the assumption.
			\item Case 3. $p_n \in B(p_0,\frac{r_0}{3})$, $q_n \notin B(p_0,r_0)$ eventually. In this case, for a fixed $n \in \mathbb{N}$ we have that
			\begin{align*} g(m_{p_n,q_n})&=\frac{g(p_n)-g(q_n)}{d(p_n,q_n)}=\frac{\varphi(p_n)f(p_n)-(1-\frac{\varepsilon r_0}{12})f(q_n)}{d(p_n,q_n)}\\
				&\leq \frac{\varphi(p_n)f(p_n)-(1-\frac{\varepsilon r_0}{12})(f(p_n)-d(p_n,q_n))}{d(p_n,q_n)}=\frac{\left (\varphi(p_n)-\left (1-\frac{\varepsilon r_0}{12}\right )\right )f(p_n)}{d(p_n,q_n)} + \left( 1-\frac{\varepsilon r_0}{12}\right )\\
				&\leq \frac{\left (1-\left (1-\frac{\varepsilon r_0}{12}\right )\right )\frac{r_0}{3}}{d(p_n,q_n)} + \left( 1-\frac{\varepsilon r_0}{12}\right )
				\leq \frac{ \frac{\varepsilon r_0}{12}\frac{r_0}{3}}{\frac{2r_0}{3}} + \left( 1-\frac{\varepsilon r_0}{12}\right )=\frac{\varepsilon r_0}{24} + \left( 1-\frac{\varepsilon r_0}{12}\right )
				\leq 1-\frac{\varepsilon r_0}{24}.
			\end{align*}
			Since $\|g\|_L\geq 1$, we conclude that $\{h_n\}$ cannot converge to $g$ in this case either. The case when $p_n \notin B(p_0,r_0)$ and $q_n \in B(p_0,\frac{r_0}{3})$ is analogous.\qedhere
		\end{itemize}
	\end{proof}
	
	We are ready to present the proof of the main result of this section.
	
	\begin{proof}[Proof of Theorem \ref{theo:C1}]
		Since $\alpha'$ is continuous and nonidentically zero, we can take $J_0$ a nontrivial subinterval of $J$ so that $\alpha'(t)\neq 0$ for every $t \in J_0$. Hence, we can reparametrize $\alpha$ in $J_0$ with respect to arc length. Moreover, $J_0$ can be taken small enough so that $|J_0|<1$ and (\ref{equivdist}) is satisfied. Up to a change of variables we can write $J_0=[0,\rho]$ for some $0<\rho<1$.
		Consider the function $g_0\colon [0,\rho] \longrightarrow \mathbb{R}$ given by
		\[ g_0(x)=\inf \left \{\frac{\|\alpha(t)-\alpha(s)\|}{|t-s|} \colon t,s \in [0,\rho], |t-s|=x\right \} \quad \forall \, x \in [0,\rho].\]
		Let $g \colon [0,1] \longrightarrow \mathbb{R}$ be the extension of $g_0$ given by $g(x)=g(\rho)$ for every $\rho\leq x\leq 1$. In view of Lemma \ref{lemma:inf} and (\ref{equivdist}), the function $g$ satisfies statement (i) of Theorem \ref{theo:Cantor}. Consequently, there exists a Cantor set $C\subseteq[0,1]$ satisfying that $|C|>0$ and 
		\[\frac{|C\cap I|}{|I|} <g(|I|) \quad \mbox{ for every nontrivial interval } I \subseteq [0,1].\] 
		Now, write $\Gamma_0=\{\alpha(t)\colon t \in [0,\rho]\}$ and define $F \colon \Gamma_0 \longrightarrow \mathbb{R}$ by
		\[ F(\alpha(t))=\int_0^ t \chi_C(s) \, ds \quad \forall \, t \in [0,\rho].\]
		We claim that $F$ is a Lispchitz function that does not attain its Lipschitz constant and $\|F\|_L=1$. First, Lemma \ref{lemma:s} and Lemma \ref{lemma:inf} give that for a positive sequence $\{x_n\}_{n \in \mathbb{N}}$ converging to $0$ we have $\lim_{n\to \infty} |F(m_{\alpha(0),\alpha(x_n)})| =1$, which implies that $\|F\|_L\geq 1$. On the other hand, for $\alpha(t)$, $\alpha(s) \in \Gamma_0$ with $t>s$, we have that
		\[ \frac{|F(\alpha(t))-F(\alpha(s))|}{\|\alpha(t)-\alpha(s)\|}=\frac{|C\cap [s,t]|}{|t-s|}\frac{|t-s|}{\|\alpha(t)-\alpha(s)\|}<g(|t-s|) \frac{|t-s|}{\|\alpha(t)-\alpha(s)\|}\leq 1.\]
		Hence, $F$ does not attain its Lipschitz constant and $\|F\|_L=1$. Next, define $H\colon \Gamma_0 \longrightarrow \mathbb{R}$ by
		\[ H(\alpha(t))=\int_{0}^{t} \chi_C(s) - \frac{1}{8}\chi_{I\setminus C}(s)\, ds \quad \forall \, t \in [0,\rho].\]
		A similar argument as before shows that $\|H\|_L=1$.
		We claim that $H\notin \overline{\SA(\Gamma_0,\mathbb{R})}$. Indeed, if the claim is not true, then we find a sequence $\{L_n\}_{n \in \mathbb{N}}\subseteq \SA(\Gamma_0,\mathbb{R})$ converging to $H$. Define $\Phi\colon \Lip(\Gamma_0,\mathbb{R}) \longrightarrow \Lip([0,\rho],\mathbb{R})$ by
		\[ \Phi(G)(t)=G(\alpha(t)) \quad \forall \, G \in \Lip(\Gamma_0,\mathbb{R}), \quad\forall\, t \in [0,\rho].\] 
		From (\ref{equivdist}) it follows that $\Phi$ is an isomorphism with $\|\Phi\|\leq 1$, and so $\{\Phi(L_n)\}_{n\in \mathbb{N}}$ converges to $\Phi(H)$. Now, recall that Lipschitz functions from $[0,\rho]$ to $\mathbb{R}$ are differentiable almost everywhere. Moreover, their Lipschitz constant is the essential supremum of its derivative. Let us write $L'_n$ and $H'$ for the derivatives of $\Phi(L_n)$ and $\Phi(H)$, respectively. Then, $H'=\chi_C - \frac{1}{8}\chi_{I\setminus C}$ almost everywhere, so $\|H'\|_\infty = 1$. Hence, we may assume that $\|L'_n\|_\infty=1$ for every $n \in \mathbb{N}$. Take $n \in \mathbb{N}$ large enough so that $\|L_n-H\|_L<\frac{1}{8}$. Then, let us show that $L_n$ cannot attain its Lipschitz constant. First, notice that $\|L_n\|_L\geq \|\Phi(L_n)\|_L=\|L'_n\|_\infty=1$. On the other hand, we have
		\[\|L_n'-H'\|_\infty=\|\Phi(L_n-H)\|_L\leq \|L_n-H\|_L<\frac{1}{8}.\]
		Since $\|L_n'\|_\infty=1$, we conclude that $\chi_C\geq L_n'$ almost everywhere in $[0,\rho]$. Pick two points $t>s \in [0,\rho]$. First, assume that $L_n(m_{\alpha(t),\alpha(s)})\geq1$. Then, we have
		\[ F(m_{\alpha(t),\alpha(s)})= \frac{\int_s^t \chi_C(x)\, dx}{\|\alpha(t)-\alpha(s)\|}\geq \frac{\int_s^t L_n'(x)\, dx}{\|\alpha(t)-\alpha(s)\|}=L_n(m_{\alpha(t),\alpha(s)})\geq1,\]
		which contradicts the fact that $F$ does not attain its Lipschitz constant and $\|F\|_L=1$. Now, observe
		\[ L_n(m_{\alpha(s),\alpha(t)})=\frac{-\int_s^t L_n'(x)\, dx}{\|\alpha(t)-\alpha(s)\|}\leq\frac{\int_s^t\frac{1}{4}\, dx}{\|\alpha(t)-\alpha(s)\|}=\frac{1}{4}\frac{t-s}{\|\alpha(t)-\alpha(s)\|}< 1.\]
		Thus, $L_n$ does not attain its Lipschitz constant. Consequently, $H\notin \overline{\SA(\Gamma_0,\mathbb{R})}$.
		
		Finally, it is clear that we can apply Lemma \ref{lemma:open} to the subset $\Gamma_0$ and the function $H$ to obtain that $\SA(\Gamma,\mathbb{R})$ is not dense in $\Lip(\Gamma,\mathbb{R})$. Indeed, if $t_0 \in (0,\rho)$ is any Lebesgue point of density of the Cantor set $C$, then we can take $p_0=\alpha(t_0)$.
	\end{proof}

	\noindent \textbf{Acknowledgment:\ } The author is very grateful to Miguel Mart\'{i}n, Fedor Nazarov, Abraham Rueda Zoca, and specially Luis Carlos Garc\'{i}a Lirola for many comments which have improved the final version of this paper.
	
	\bibliographystyle{plain}
	\bibliography{Cantor_sets_and_C1_curves}	

\begin{thebibliography}{10}

\bibitem{albiac2021lipschitz}
F.~Albiac, J.~L. Ansorena, M.~C\'{u}th, and M.~Doucha.
\newblock Lipschitz free spaces isomorphic to their infinite sums and geometric
  applications.
\newblock {\em Trans. Amer. Math. Soc. Ser. B (in press)}, 2021.

\bibitem{aliaga2021purely}
R.~J. Aliaga, C.~Gartland, C.~Petitjean, and A.~Proch\'{a}zka.
\newblock Purely 1-unrectifiable spaces and locally flat {L}ipschitz functions.
\newblock {\em Preprint available at arxiv.org with reference 2103.09370},
  2021.

\bibitem{cascales2019on}
B.~Cascales, R.~Chiclana, L.~C. Garc\'{\i}a-Lirola, M.~Mart\'{\i}n, and
  A.~Rueda~Zoca.
\newblock On strongly norm attaining {L}ipschitz maps.
\newblock {\em J. Funct. Anal.}, 277(6):1677--1717, 2019.

\bibitem{chiclana2021examples}
R.~Chiclana, L.~C. Garc\'{\i}a-Lirola, M.~Mart\'{\i}n, and A.~Rueda~Zoca.
\newblock Examples and applications of the density of strongly norm attaining
  {L}ipschitz maps.
\newblock {\em Rev. Mat. Iberoam.}, 37(5):1917--1951, 2021.

\bibitem{cuth2016on}
M.~C\'{u}th, M.~Doucha, and P.~Wojtaszczyk.
\newblock On the structure of {L}ipschitz-free spaces.
\newblock {\em Proc. Amer. Math. Soc.}, 144(9):3833--3846, 2016.

\bibitem{godard2010tree}
A.~Godard.
\newblock Tree metrics and their {L}ipschitz-free spaces.
\newblock {\em Proc. Amer. Math. Soc.}, 138(12):4311--4320, 2010.

\bibitem{godefroy2015survey}
G.~Godefroy.
\newblock A survey on {L}ipschitz-free {B}anach spaces.
\newblock {\em Comment. Math.}, 55(2):89--118, 2015.

\bibitem{godefroy2003lipschitz}
G.~Godefroy and N.~J. Kalton.
\newblock Lipschitz-free {B}anach spaces.
\newblock volume 159, pages 121--141. 2003.
\newblock Dedicated to Professor Aleksander Pe\l czy\'{n}ski on the occasion of
  his 70th birthday.

\bibitem{kadets2016normattaining}
V.~Kadets, M.~Mart\'{\i}n, and M.~Soloviova.
\newblock Norm-attaining {L}ipschitz functionals.
\newblock {\em Banach J. Math. Anal.}, 10(3):621--637, 2016.

\bibitem{weaver2018lipschitz}
N.~Weaver.
\newblock {\em Lipschitz algebras}.
\newblock World Scientific Publishing Co. Pte. Ltd., Hackensack, NJ, 2018.
\newblock Second edition of [ MR1832645].

\end{thebibliography}
\end{document}